\documentclass[11pt]{amsart}

\title[Arc complexes, sphere complexes and Goeritz groups]
{Arc complexes, sphere complexes and Goeritz groups}

\author{Sangbum Cho}
\thanks{The first-named author is supported by Basic Science Research Program through the National Research Foundation of Korea (NRF) funded by the Ministry of Education, Science and Technology (2012006520).}
\address{Department of Mathematics Education, Hanyang University, Seoul 133-791,
Korea}
\email{scho@hanyang.ac.kr}

\author{Yuya Koda}
\thanks{The second-named author is supported by JSPS Postdoctoral Fellowships for Research Abroad, and
by the Grant-in-Aid for Young Scientists (B), JSPS KAKENHI Grant Number 26800028.}

\address{
Department of Mathematics \newline 
\indent Hiroshima University, 1-3-1 Kagamiyama, Higashi-Hiroshima, 739-8526, Japan}
\email{ykoda@hiroshima-u.ac.jp}

\author{Arim Seo}
\thanks{}
\address{Department of Mathematics Education, Korea University, Seoul 136-701, Korea}
\email{arimseo@korea.ac.kr}


\usepackage{amsfonts,amsmath,amssymb,amscd}
\usepackage{amsthm}
\usepackage{latexsym}
\usepackage{graphicx}
\usepackage{psfrag}
\usepackage{color}
\usepackage{pinlabel}
\usepackage{xypic}
\usepackage[all]{xy}

\theoremstyle{plain}
\newtheorem*{theorem*}{Theorem}
\newtheorem*{lemma*} {Lemma}
\newtheorem*{corollary*} {Corollary}
\newtheorem*{proposition*}{Proposition}
\newtheorem*{conjecture*}{Conjecture}
\newtheorem{theorem}{Theorem}[section]
\newtheorem{lemma}[theorem]{Lemma}
\newtheorem{corollary}[theorem]{Corollary}
\newtheorem{proposition}[theorem]{Proposition}

\newtheorem{claim}{Claim}

\theoremstyle{remark}

\theoremstyle{definition}

\textwidth=5.8in
\voffset=0.25in
\oddsidemargin.25in
\evensidemargin.25in
\marginparwidth=.85in

\newcommand{\Natural}{\mathbb{N}}
\newcommand{\Integer}{\mathbb{Z}}

\newcommand{\MCG}{\mathrm{MCG}}
\newcommand{\Homeo}{\mathrm{Homeo}}

\newcommand{\Nbd}{\operatorname{Nbd}}
\newcommand{\st}{\mathit{st}}

\makeatletter

\makeatother



\begin{document}

\maketitle

\begin{abstract}
We show that if a Heegaard splitting is obtained by gluing a splitting of Hempel distance at least $4$ and the genus-$1$ splitting of $S^2 \times S^1$, then the Goeritz group of the splitting is finitely generated. 
To show this, 
we first provide a sufficient condition for a full subcomplex of the arc complex for a compact orientable surface
to be contractible, which generalizes the result by Hatcher that the arc complexes are contractible. 
We then construct infinitely many Heegaard splittings, including the above-mentioned Heegaard splitting, 
for which suitably defined complexes of Haken spheres are contractible. 
\end{abstract}

%
%

\section*{Introduction}

Let $\Sigma_{g, n}$ be a compact connected orientable surface of genus $g$ with $n$ holes,
where $n \geqslant 3$ if $g = 0$, and $n \geqslant 1$ if $g \geqslant 1$.
As an analogue of the curve complex, the {\it arc complex} $\mathcal{A}_{g, n}$ of $\Sigma_{g, n}$
is defined to be the simplicial complex whose vertices are isotopy classes of
essential arcs in $\Sigma_{g, n}$ and whose $k$-simplices are
collections of $k+1$ vertices represented by pairwise disjoint and non-isotopic arcs in $\Sigma_{g, n}$.
In \cite{Hat91}, Hatcher proved that the complex $\mathcal{A}_{g, n}$ is contractible.
See also Irmak-McCarthy \cite{IM10} and Korkmaz-Papadopoulos \cite{KP10} for related works on arc complexes.

In Section \ref{sec:Arc complexes} in this paper, we provide a useful sufficient condition for a full subcomplex of the arc complex
to be contractible (Theorem \ref{thm:contractibility of arc complexes}).
Since the arc complex $\mathcal{A}_{g,n}$ itself satisfies this condition, it is contractible, which gives an updated proof for the Hatcher's result.
Moreover, we also show that
the full subcomplex $\mathcal{A}_{g, n}^*$
of $\mathcal{A}_{g, n}$, with $n \geqslant 2$, spanned by vertices of arcs connecting different boundary components is contractible.

A genus-$g$ {\it Heegaard splitting} of a closed orientable $3$-manifold $M$ is a decomposition of the manifold into two handlebodies of the same genus $g$.
That is, $M = V \cup W$ and $V \cap W = \partial V = \partial W = \Sigma$, where $V$ and $W$ are handlebodies of genus $g$ and $\Sigma$ is their common boundary surface.
We simply denote by $(V, W; \Sigma)$ the splitting, and call the surface $\Sigma$ the {\it Heegaard surface} of the splitting.
It is well known that every closed orientable $3$-manifold admits a genus-$g$ Heegaard splitting for some genus $g \geq 0$.
Given a genus-$g$ Heegaard splitting $(V, W; \Sigma)$ with $g \geq 2$ for $M$, a sphere $P$ embedded in $M$ is called a {\it Haken sphere} if $P \cap \Sigma$
is a single essential simple closed curve in $\Sigma$.
Two Haken spheres $P$ and $Q$ are said to be {\it equivalent} if $P \cap \Sigma$ is isotopic to $Q \cap \Sigma$ in $\Sigma$.
When the splitting $(V, W; \Sigma)$ admits Haken spheres, we denote by $\mu = \mu(V, W; \Sigma)$ the minimal cardinality of $P \cap Q \cap \Sigma$,
where $P$ and $Q$ vary over all pairwise non-equivalent Haken spheres for the splitting.
The {\it sphere complex} for the splitting $(V, W; \Sigma)$ is then defined to be the
simplicial complex whose vertices are equivalence classes of Haken spheres for the splitting
and whose $k$-simplices are collections of $k+1$ vertices represented by Haken spheres $P_0, P_1, \cdots, P_k$, respectively, such that the cardinality of $P_i \cap P_j \cap \Sigma$ is $\mu$ for all $0 \leq i < j \leq k$.

The structures of sphere complexes for genus-$2$ Heegaard splittings have been studied by several authors.
If a genus-$2$ Heegaard splitting for a $3$-manifold admits Haken spheres, then the manifold is one of $S^3$, $S^2 \times S^1$, lens spaces, and their connected sums.
It is known that the sphere complex for the genus-$2$ Heegaard splitting of $S^3$ is connected and even contractible from Scharlemann \cite{Sch04}, Akbas \cite{Akb08} and Cho \cite{Cho08}.
Lei \cite{Lei05} and Lei-Zhang \cite{LZ04} proved that the sphere complexes are connected for  genus-$2$ Heegaard splittings of non-prime 3-manifolds, that is, the connected sum whose summands are lens spaces or $S^2 \times S^1$, and later, in Cho-Koda \cite{CK13b} it is shown that they are actually contractible.


In Section \ref{sec:Sphere complexes}, we study the Heegaard splitting for a $3$-manifold having a single $S^2 \times S^1$ summand in its prime decomposition.
We prove that, if a genus-$g$ Heegaard splitting with $g \geq 2$ is the splitting obtained by
gluing a genus-$(g-1)$ Heegaard splitting of Hempel distance at least $2$ and
the genus-$1$ Heegaard splitting of $S^2 \times S^1$, then its sphere complex is a contractible, $(4g - 5)$-dimensional complex (Corollary \ref{cor:arc complex and sphere complex for the connected sums}).
In fact, we show that the sphere complex is isomorphic to the full subcomplex $\mathcal{A}^*_{g-1, 2}$ of the arc complex $\mathcal{A}_{g-1, 2}$.
As a special case, the sphere complex for the genus-$2$ Heegaard splitting of $S^2 \times S^1$ is a contractible, $3$-dimensional complex (Corollary \ref{cor:contractibility of sphere compex}).


For a Heegaard splitting $(V, W; \Sigma)$ for a $3$-manifold, the {\it Goeritz group} is defined to be the group of isotopy classes of the
orientation-preserving homeomorphisms of the manifold that preserve $V$ and $W$ setwise.
One might expect that the Goeritz group would be simpler once we have more complicated  Heegaard splitting in some sense.
One of the important results on Goeritz group in this view point is that the Goeritz groups of Heegaard splittings of Hempel distance at least $4$ are all finite groups, which is given in Johnson \cite{Joh10}.
On the other hand, it is hard to determine
whether the Goeritz group of a given Heegaard splitting of low Hempel distance is finitely generated or not.
Even it remains open weather the Goeritz group of a Heegaard splitting of genus at least 3
for the 3-sphere is finitely generated or not.
The Goeritz groups of genus-$1$ Heegaard splittings are easy to describe, and for genus-$2$ reducible Heegaard splittings,
their Goeritz groups have been studied in \cite{Goe33, Sch04, Akb08, Cho08, Cho12, CK12, CK13a, CK13b}.


In the final section, we study the Goeritz groups of the Heegaard splittings given in Section \ref{sec:Sphere complexes}.
The main result is that, for a Heegaard splitting obtained by gluing a Heegaard splitting of Hempel distance at least $4$ and the genus-$1$ Heegaard splitting of $S^2 \times S^1$,
then its Goeritz group is finitely generated (Corollary \ref{cor:generators of Goeritz groups for the connected sums}).
This can be compared with the result in Johnson \cite{Joh11} that, if a Heegaard splitting
is obtained by gluing a Heegaard splitting of high Hempel distance and the genus-$1$ Heegaard splitting of $S^3$,
then its Goeritz group is finitely generated.

\vspace{1em}
Throughout the paper, we will work in the smooth category unless otherwise mentioned.
By $\Nbd(X; Y)$ we will denote a regular neighborhood of a subspace $X$ of a polyhedral space $Y$.

\section{Arc complexes}
\label{sec:Arc complexes}

We start with recalling a sufficient condition for contractibility
of a simplicial complex, introduced in \cite{Cho08}, which is a generalization of the proof of Theorem 5.3 in \cite{McC91}.

Let $\mathcal{K}$ be a simplicial complex.
We call a vertex $w$ is {\it adjacent} to a vertex $v$ of $\mathcal K$ if $w$ equals $v$ or $w$ is joined to $v$ by an edge of $\mathcal K$.
We denote by $\st (v)$ the {\it star} of a vertex $v$ of $\mathcal K$ which is the full subcomplex of  $\mathcal{K}$
spanned by the vertices adjacent to $v$.
An {\it adjacency pair} $(X, v)$ in $\mathcal{K}$ is
a finite multiset that consists of vertices of $\st (v)$.
Here the finite multiset $X$ is a finite set $\{v_1, v_2, \ldots  , v_k\}$ allowed to have $v_i = v_j$ for some $1\leq i < j \leq k$.
A {\it remoteness function} for a vertex $v_0$ of $\mathcal K$ is a function
$r$ from the set of vertices of $\mathcal K$ to $\Natural \cup \{ 0 \}$ satisfying
$r^{-1} (0) \subset \st (v_0)$.
A {\it blocking function} for a remoteness function $r$ is a function $b$ from
the set of adjacency pairs of $\mathcal{K}$ to $\Natural \cup \{ 0 \}$ satisfying the following properties
for every adjacency pair $(X, v)$ with $r(v) > 0$:
\begin{enumerate}
\item
if $b (X, v) = 0$, then there exists a vertex $w$ of $\mathcal K$ joined to $v$ by an edge of $\mathcal{K}$ such that
$r (w) < r (v)$ and $(X, w)$ is also an adjacency pair (see Figure \ref{fig:contractibility} (a)).
\item
if $b (X, v) > 0$, then there exist an element $v'$ of $X$ and a vertex $w'$ of $\mathcal{K}$ which is joined to $v'$ by an edge of $\mathcal{K}$
such that
\begin{enumerate}
\item
$r (w') < r (v')$,
\item
if an element $x$ of $X$ is adjacent to $v'$, then $x$ is also adjacent to $w'$, and
\item
$b (X \setminus \{ v' \} \cup \{ w' \} , v) < b (X, v)$,
where $X \setminus \{ v' \} \cup \{ w' \}$ is the multiset
obtained by removing one instance of $v'$ from $X$ and
adding one instance of $w'$ to $X$ (see Figure \ref{fig:contractibility} (b)).
\end{enumerate}
\end{enumerate}

\smallskip

\begin{figure}[htbp]
\begin{center}
\includegraphics[width=10cm,clip]{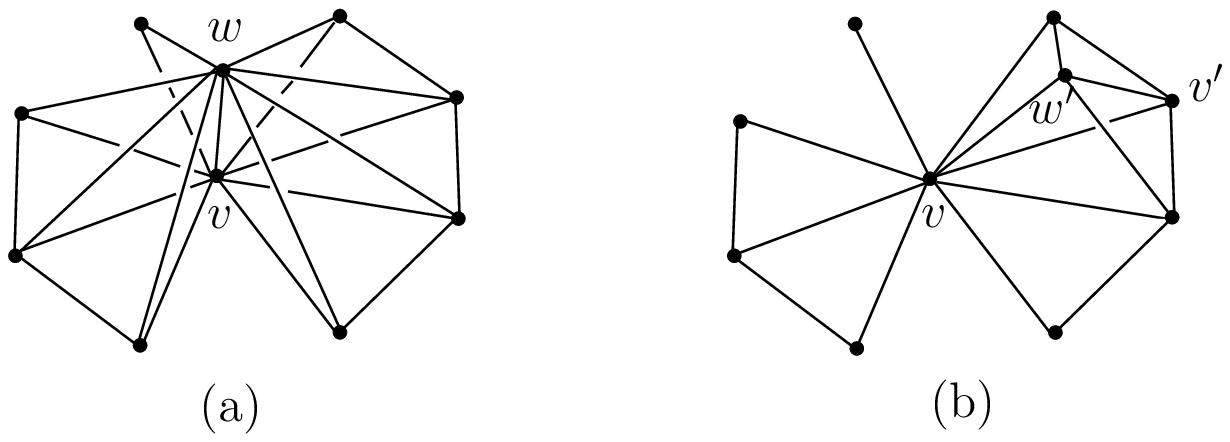}
\caption{}
\label{fig:contractibility}
\end{center}
\end{figure}


A simplicial complex $\mathcal{K}$ is called a {\it flag complex}
if any collection of pairwise distinct $k+1$ vertices span a
$k$-simplex whenever any two of
them span a 1-simplex.

\begin{lemma}[\cite{Cho08} Proposition 3.1]
\label{lem:sufficient condition for contractibility}
Let $\mathcal{K}$ be a flag complex with base vertex $v_0$.
If there exists a remoteness function $r$ on the set of vertices of $\mathcal{K}$ for $v_0$ that admits a blocking function $b$,
then $\mathcal{K}$ is contractible.
\end{lemma}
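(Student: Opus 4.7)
The plan is to show that every simplicial map $f\colon L \to \mathcal{K}$ from a finite simplicial complex $L$ is simplicially homotopic to the constant map at $v_0$; by simplicial approximation this will imply $\pi_n(\mathcal{K}) = 0$ for all $n$, hence contractibility. I would induct on the lexicographic complexity $(N(f), K(f), B(f))$, where $N(f) = \max_{\tilde v \in L^{(0)}} r(f(\tilde v))$, $K(f)$ counts the vertices of $L$ realizing $N(f)$, and $B(f) = \sum b(X_{\tilde v}, f(\tilde v))$ is summed over those $\tilde v$ with $r(f(\tilde v)) = N(f)$, with $X_{\tilde v}$ the multiset of images under $f$ of the vertices joined to $\tilde v$ by an edge of $L$.

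The base case $N(f) = 0$ is immediate: every vertex of $f(L)$ lies in $\st(v_0)$, so the flag hypothesis promotes every simplex $\sigma$ of $f(L)$ to a simplex $\{v_0\} \cup \sigma$ of $\mathcal{K}$, placing $f(L)$ inside the contractible cone $v_0 * f(L)$ along which a straight-line homotopy collapses $f$ to the constant map at $v_0$.

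For the inductive step I would pick a vertex $\tilde v$ realizing $N(f)$ that minimizes $b(X_{\tilde v}, v)$ among such vertices, where $v = f(\tilde v)$. If $b(X_{\tilde v}, v) = 0$, condition (1) provides $w$ adjacent to $v$ with $r(w) < N(f)$ and $X_{\tilde v} \subseteq \st(w)$; redefining $f(\tilde v)$ to be $w$ yields a simplicial map $f'$ by the flag axiom, since each simplex $\tau$ in the link of $\tilde v$ has $\{w\} \cup f(\tau)$ pairwise adjacent. The homotopy $f \simeq f'$ is supported on the star of $\tilde v$, with values in the contractible join $\{v, w\} * f(\mathrm{lk}\,\tilde v) \subseteq \mathcal{K}$, and strictly drops $(N, K)$. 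If $b(X_{\tilde v}, v) > 0$, condition (2) gives $v' \in X_{\tilde v}$ and $w'$ satisfying (a)--(c); I would pick $\tilde u$ in the link of $\tilde v$ with $f(\tilde u) = v'$ representing the distinguished instance of $v'$, and redefine $f(\tilde u)$ to be $w'$ so that property (c) causes $B$ to decrease while $(N, K)$ is unchanged.

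The main technical obstacle will be justifying the second modification: the link of $\tilde u$ in $L$ can contain vertices outside the star of $\tilde v$, and condition (b) gives no adjacency information for such vertices relative to $w'$, so the naive modification might fail to be simplicial on simplices through $\tilde u$ that leave the star of $\tilde v$. I would resolve this by first applying a stellar subdivision of $L$ along the edge $\tilde v \tilde u$ so that every simplex of the refined complex containing $\tilde u$ either lies entirely in the star of $\tilde v$ or avoids $\tilde v$ altogether; on the first type the modification goes through via (b) and the flag axiom, while on the second type $f$ is left unchanged. The resulting $f'$ is then simplicial and homotopic to $f$ through the contractible join of $\{v', w'\}$ with the image of the link of $\tilde u$, which completes the induction.
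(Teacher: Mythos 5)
Your general strategy mirrors the paper's sketch of the proof in \cite{Cho08}: induct on a lexicographic complexity built from the remoteness and blocking functions and reduce it by local modifications of a simplicial map. Your base case and case (1) are sound, and you correctly flag that case (2) has a simpliciality problem. But the resolution you offer is internally inconsistent: after a stellar subdivision of the edge $\tilde v\tilde u$, \emph{no} simplex of the refinement contains both $\tilde u$ and $\tilde v$, so your dichotomy degenerates to the second alternative only, and you then say both that $f(\tilde u)$ is to be redefined and that $f$ is ``left unchanged'' on all simplices through $\tilde u$. What actually works is to redefine $f$ at the barycenter $\tilde z$ of the subdivided edge, sending it from $v'$ to $w'$: then every simplex through $\tilde z$ contains either $\tilde v$ or $\tilde u$, and condition (b), the implicit fact that $w'\in\st(v)$ (needed for $(X\setminus\{v'\}\cup\{w'\},v)$ to be an adjacency pair), and the flag axiom make $f'$ simplicial and homotopic to $f$ through the join with $\{v',w'\}$.

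The more serious gap is that, even with this repair, your third coordinate $B(f)=\sum b(X_{\tilde v},f(\tilde v))$ need not decrease in case (2). Changing $f$ at $\tilde z$ alters the multiset $X_{\tilde v''}$ for every other vertex $\tilde v''$ adjacent to $\tilde z$ in the refinement, in particular for any vertex in a simplex with both $\tilde v$ and $\tilde u$ that also realizes $N(f)$. Condition (c) controls only the term for the chosen $\tilde v$; nothing prevents $b(X_{\tilde v''},f(\tilde v''))$ from increasing, so the sum can go up and the induction does not close. Replacing the sum by the minimum $\min\{\,b(X_{\tilde v},f(\tilde v)) : r(f(\tilde v))=N(f)\,\}$ fixes this: since you chose $\tilde v$ as a minimizer, (c) strictly lowers that term and hence the minimum, independently of what happens at other vertices; the set of maximizers is unchanged because $r(f'(\tilde z))=r(w')<N(f)$. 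With these two corrections the induction closes and the argument matches the one the paper attributes to \cite{Cho08}.
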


The idea of the proof given in \cite{Cho08} is to show that the homotopy groups are all trivial. That is, given any simplicial map $f: S^q \rightarrow \mathcal K$, $q \geq 0$, with respect to a triangulation $\Delta$ of $S^q$, we find a simplicial map $g: S^q \rightarrow \mathcal K$ with respect to a triangulation $\Delta'$ obtained from $\Delta$ by finitely many barycentric subdivisions, such that $g$ is homotopic to $f$ and the image of $g$ is contained in $\st(v_0)$.

\medskip

Now we return to the arc complex $\mathcal{A}_{g, n}$ of a compact orientable surface $\Sigma_{g, n}$ of genus $g$ with $n$ holes, where $n \geqslant 3$ if $g = 0$, and $n \geqslant 1$ if $g \geqslant 1$.
It is a standard fact that any collection of isotopy classes of essential arcs in $\Sigma_{g, n}$ can be realized by a collection of representative arcs having pairwise minimal intersection.
In particular, for a collection $\{v_0, v_1, \ldots , v_k\}$ of vertices of $\mathcal{A}_{g, n}$ if $v_i$ and $v_j$ are joined by an edge for each $0 \leq i < j \leq k$, then $\{v_0, v_1, \ldots , v_k\}$ spans a $k$-simplex.
Thus we have the following.

\begin{lemma}
\label{lem:arc complex is a flag complex}
The arc complex $\mathcal{A}_{g, n}$ is a flag complex, and any full subcomplex of $\mathcal{A}_{g,n}$ is
also a flag complex.
\end{lemma}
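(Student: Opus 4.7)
The statement has two parts, and the preamble just above the lemma essentially hands us the first one, so my plan is to package that observation as a short proof and then derive the second part formally from the definition of full subcomplex.

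For the first part, the plan is to take any finite collection $\{v_0, v_1, \ldots, v_k\}$ of pairwise distinct vertices of $\mathcal{A}_{g,n}$ such that every two of them span a $1$-simplex, and produce representative arcs $\alpha_0, \alpha_1, \ldots, \alpha_k$ that are simultaneously pairwise disjoint and non-isotopic; this exhibits the $k$-simplex required by the flag condition. The key input is the standard realization fact cited just above the lemma: any finite collection of isotopy classes of essential arcs in $\Sigma_{g,n}$ can be represented by arcs that pairwise realize their geometric intersection number. Since $v_i$ and $v_j$ are joined by an edge of $\mathcal{A}_{g,n}$, by definition they admit disjoint representatives, hence the minimal intersection of $v_i$ and $v_j$ is zero. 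Applying the realization fact to $\{v_0, \ldots, v_k\}$ therefore yields representatives $\alpha_i$ that are pairwise disjoint, and the $\alpha_i$ are pairwise non-isotopic because the $v_i$ are distinct. Thus $\{v_0, \ldots, v_k\}$ spans a $k$-simplex of $\mathcal{A}_{g,n}$.

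For the second part, I would simply recall that a full subcomplex $\mathcal{K}'$ of $\mathcal{K}$ is determined by its vertex set: every simplex of $\mathcal{K}$ all of whose vertices lie in $\mathcal{K}'$ is itself a simplex of $\mathcal{K}'$. So if $\{v_0, \ldots, v_k\}$ are pairwise distinct vertices of $\mathcal{K}'$ that are pairwise joined by edges of $\mathcal{K}'$, then in particular they are pairwise joined by edges of $\mathcal{A}_{g,n}$, so by the first part they span a $k$-simplex of $\mathcal{A}_{g,n}$; since all vertices of that simplex lie in $\mathcal{K}'$, the simplex already belongs to $\mathcal{K}'$. Hence $\mathcal{K}'$ is flag.

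There is no real obstacle: the only non-trivial ingredient is the simultaneous minimal-position realization for finite families of arcs, and that is the standard surface-topology fact that the authors explicitly invoke. The proof is essentially a two-line unpacking of definitions once that fact is in hand.
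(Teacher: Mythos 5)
Your proof is correct and follows the same route as the paper: the first part is exactly the preamble's appeal to simultaneous pairwise minimal-position realization of a finite family of arc isotopy classes, and the second part is the standard unpacking of the definition of a full subcomplex, which the paper leaves implicit.
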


Let $\alpha$ and $\alpha_0$ be essential arcs on the surface $\Sigma_{g, n}$
which intersect each other transversely and minimally.
A component $\beta$ of $\alpha_0$ cut off by $\alpha \cap \alpha_0$ is said to be {\it outermost}
if $\beta \cap \alpha$ consists of a single point.
We note that there exists exactly two such subarcs of $\alpha_0$.
The intersection $\beta \cap \alpha$ cuts $\alpha$ into two subarcs $\beta'$ and $\beta''$.
We call the two new arcs $\alpha' = \beta \cup \beta'$ and $\alpha'' = \beta \cup \beta''$ the
{\it arcs obtained from $\alpha$ by surgery along $\beta$.}
We observe that by a small isotopy $\alpha'$ and $\alpha''$ are disjoint from $\alpha$, and $|\alpha_0 \cap \alpha'| < |\alpha_0 \cap \alpha|$ and $|\alpha_0 \cap \alpha''| < |\alpha_0 \cap \alpha|$ since the intersection $\beta \cap \alpha$ is no longer counted.

\begin{theorem}
\label{thm:contractibility of arc complexes}
Any full subcomplex $\mathcal{A}$ of $\mathcal{A}_{g, n}$ satisfying the following property
is contractible.
\begin{description}
\item[Surgery Property]
Let $\alpha$ and $\alpha_0$ be representative arcs of vertices of $\mathcal{A}$ that intersect each other
transversely an minimally.
If $\alpha \cap \alpha_0 \neq \emptyset$,
then at least one of the two arcs obtained from $\alpha$ by surgery along an outermost subarc of $\alpha_0$
cut off by $\alpha \cap \alpha_0$ represents a vertex of $\mathcal{A}$.
\end{description}
\end{theorem}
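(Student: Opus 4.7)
The plan is to apply Lemma \ref{lem:sufficient condition for contractibility}. Since $\mathcal{A}$ is a flag complex by Lemma \ref{lem:arc complex is a flag complex}, I only need to produce compatible remoteness and blocking functions on $\mathcal{A}$. First I would fix any $v_0 \in \mathcal{A}$ with a representative arc $\alpha_0$ and set $r(v) := |\alpha \cap \alpha_0|$, where $\alpha$ is a representative of $v$ in minimal position with $\alpha_0$. Fullness of $\mathcal{A}$ in $\mathcal{A}_{g,n}$ forces $r^{-1}(0) \subset \st(v_0)$: any vertex $v$ with $r(v) = 0$ has a representative disjoint from $\alpha_0$, hence is joined to $v_0$ by an edge of $\mathcal{A}_{g,n}$ and therefore of $\mathcal{A}$.

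For the blocking function, given an adjacency pair $(X, v)$ with $X = \{v_1, \ldots, v_k\}$, I would call a tuple of representatives $(\alpha, \alpha_1, \ldots, \alpha_k)$ \emph{admissible} if $\alpha$ is in minimal position with $\alpha_0$ and each $\alpha_i$ is chosen disjoint from $\alpha$; such tuples exist because each $v_i$ lies in $\st(v)$. Set
$$b(X, v) := \min_{\text{admissible tuples}} \sum_{i=1}^{k} |\alpha_i \cap \alpha_0|,$$
so that $b(X, v) = 0$ precisely when some admissible tuple can be chosen with every $\alpha_i$ disjoint from $\alpha_0$ as well.

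If $b(X, v) = 0$, fix such a tuple and an outermost subarc $\beta$ of $\alpha_0$ cut off by $\alpha \cap \alpha_0$, which exists because $r(v) > 0$. The Surgery Property produces a vertex $w$ of $\mathcal{A}$ represented by one of the two surgery arcs $\alpha^\ast = \beta \cup \beta^\ast$ with $\beta^\ast \subset \alpha$. After a small isotopy, $\alpha^\ast$ is disjoint from $\alpha$; and each $\alpha_i$, being disjoint from both $\alpha$ and $\alpha_0$, is also disjoint from $\alpha^\ast$. Hence $w$ is adjacent to $v$ and to every $v_i$, while $r(w) < r(v)$, which is exactly clause (1) of the lemma.

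The main obstacle is clause (2), when $b(X, v) > 0$. My plan is to fix a minimizing admissible tuple, choose an outermost subarc $\tilde\beta$ of $\alpha_0$ cut off by the full union $\alpha_0 \cap (\alpha \cup \alpha_1 \cup \cdots \cup \alpha_k)$, and distinguish whether $\tilde\beta$ terminates on $\alpha$ or on some $\alpha_i$. If it terminates on $\alpha$, surgery on $\alpha$ along $\tilde\beta$ already produces a vertex adjacent to every element of $X$, so this configuration is of ``$b = 0$ type'' and should be absorbed into the first clause by a slight refinement of the definition. Otherwise $\tilde\beta$ terminates on some $\alpha_i$; set $v' := v_i$, isotope $\alpha_i$ to minimal position with $\alpha_0$, and invoke the Surgery Property to obtain $w' \in \mathcal{A}$ with $r(w') < r(v')$. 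Clauses (a) and (c) should follow from the outermost positioning and the fact that surgery strictly decreases the total intersection count with $\alpha_0$. The hard part will be clause (b): showing that any $v_j \in X$ abstractly adjacent to $v_i$ is also adjacent to $w'$. I would handle this by combining the flag property of $\mathcal{A}$ with minimality of the admissible tuple, noting that any $v_j$ adjacent to $v_i$ admits a representative disjoint from $\alpha_i$, and that minimality should force this representative to avoid $\tilde\beta$ as well, on pain of producing an admissible tuple of strictly smaller total intersection.
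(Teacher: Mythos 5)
Your setup — same remoteness function $r(v)=|\alpha\cap\alpha_0|$, flag property, and surgery in the $b=0$ case — matches the paper, but your blocking function is wrong and the ``slight refinement'' you gesture at is exactly where the real idea lives. With $b(X,v)=\min\sum_i|\alpha_i\cap\alpha_0|$, you can have $b(X,v)>0$ while \emph{every} outermost subarc of $\alpha_0$ cut off by $\alpha\cup\alpha_1\cup\cdots\cup\alpha_k$ terminates on $\alpha$ rather than on some $\alpha_i$. In that situation there is no candidate $v'\in X$ on which to perform surgery, so none of clauses (2)(a)--(c) of Lemma \ref{lem:sufficient condition for contractibility} can even be formulated; yet the lemma requires clause (2) whenever $b(X,v)>0$, so the argument breaks. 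You cannot ``absorb'' this into clause (1) without changing the function $b$ itself, which is what the paper does: it sets $b(X,v)$ to be the minimal number of intersections of $\alpha_1\cup\cdots\cup\alpha_n$ with an \emph{outermost subarc $\beta$ of $\alpha_0$ cut off by $\alpha$}. With that definition, $b=0$ is precisely the case where surgery on $\alpha$ along $\beta$ works (your ``$b=0$ type'' configuration), and when $b>0$ the outermost piece of $\beta$ from its boundary endpoint necessarily terminates on some $\alpha_k$, is disjoint in its interior from $\alpha$ and all $\alpha_j$, and hence yields a $w'$ adjacent to $v$ for which (2)(a)--(c) hold. Your $b$ cannot be repaired by a cosmetic tweak; the count has to be restricted to the subarc $\beta$ for the decreasing property in (2)(c) to interact correctly with the case distinction.

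A secondary issue: you only require each $\alpha_i$ to be disjoint from $\alpha$, not to be in pairwise minimal position with the other $\alpha_j$'s and with $\alpha_0$. Without simultaneous pairwise minimal position (which is what the paper arranges at the outset), the verification of (2)(b) — that $v_j$ adjacent to $v_k$ implies $v_j$ adjacent to $w'$ — does not reduce to the immediate observation that $\alpha_j$ is disjoint from $\alpha_k$ and from the interior of the outermost subarc, and the Surgery Property cannot be invoked for $\alpha_k$ and $\alpha_0$ without first knowing they are in minimal position. Once all representatives are chosen in pairwise minimal position from the start, (2)(b) is automatic and does not require the ``flag property plus minimality'' argument you propose.
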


\begin{proof}
Fix a base vertex $v_0$ of $\mathcal{A}$.
By Lemmas
\ref{lem:sufficient condition for contractibility}
and \ref{lem:arc complex is a flag complex}, it suffices to find
a remoteness function for $v_0$ that admits
a blocking function.
For each vertex $v$ of $\mathcal A$, define $r(v)$ to be the minimal cardinality of the intersection $\alpha \cap \alpha_0$,
where $\alpha$ and $\alpha_0$ are representative arcs of
$v$ and $v_0$, respectively.
By definition, $r$ is a remoteness function for $v_0$.

Let $(X, v)$ be an adjacent pair in $\mathcal{A}$, where
$r(v) > 0$ and $X = \{ v_1, v_2, \ldots, v_n \}$.
Choose representative arcs $\alpha$, $\alpha_1$, $\alpha_2 , \ldots, \alpha_n$ and $\alpha_0$
of $v$, $v_1$, $v_2 , \ldots, v_n$ and $v_0$, respectively, so that they have transversal and pairwise minimal intersection, and every crossing is a double point.
Since $r(v) > 0$, we have $\alpha \cap \alpha_0 \neq \emptyset$.
Among the two subarcs of $\alpha_0$ cut off by $\alpha \cap \alpha_0$, choose one, say $\beta$, so that the cardinality of $\beta \cap (\alpha_1 \cup \alpha_2 \cup \cdots \cup \alpha_n)$ is minimal, and then denote this cardinality by $b_0 = b_0 (\alpha $, $\alpha_1 $, $\alpha_2 , \ldots, \alpha_n $, $\alpha_0 )$.
We define $b (X , v)$ to be the minimal number
of $b_0$ among all such representative arcs of
$v$, $v_1$, $v_2, \ldots, v_n$ and $v_0$.
In the following, we will show that $b$ is a blocking function for $r$.

First, suppose that $b (X, v) = 0$.
Then by an isotopy we may assume that $\beta \cap (\alpha_1 \cup \alpha_2 \cup \cdots \cup \alpha_n) = \emptyset$.
By the Surgery Property, at least one of the two arcs
obtained from $\alpha$ by surgery along $\beta$, say $\alpha'$, represents a vertex
$w$ of $\mathcal{A}$.
By the construction, $v$ is adjacent to $w$, and $(X, w)$ is an adjacent pair.
Further, we have
$r (w) \leqslant | \alpha_0 \cap \alpha' | < | \alpha_0 \cap \alpha | = r (v)$.

Next, suppose that $b (X, v) > 0$.
We may assume that $\beta \cap (\alpha_1 \cup \alpha_2 \cup \cdots \cup \alpha_n) = b (X, v)$ by an isotopy.
Let $\gamma$ be the outermost subarc of $\alpha$ cut off by $\alpha_1 \cup \alpha_2 \cup \cdots \cup \alpha_n$
that is contained in $\beta$.
The point $(\alpha_1 \cup \alpha_2 \cup \cdots \cup \alpha_n) \cap \gamma$ is contained in $\alpha_k$ for some
$k \in \{ 1 , 2, \ldots, n \}$.
Then by the Surgery Property again, at least one of the arcs
obtained from $\alpha_k$ by surgery along $\gamma$ represents a vertex, say $w'$, of $\mathcal{A}$.
By the construction, we  have
$r (w') < r (v_k)$, $b (X \setminus \{ v_k\} \cup \{ w' \} , v) < b (X, v)$, and
each element $x$ of $X$ adjacent to $v_k$ is also adjacent to $w'$.
This completes the proof.
\end{proof}

Let $n \geqslant 2$.
We denote by $\mathcal{A}^*_{g, n}$ the full subcomplex of $\mathcal{A}_{g, n}$
spanned by the vertices represented by simple arcs connecting
the different components of the boundary of $\Sigma_{g, n}$.
It is easy to verify that the arc complex $\mathcal{A}_{g, n}$ itself and the subcomplex $\mathcal{A}^*_{g, n}$ satisfy the Surgery Property.
Thus we have the following.

\begin{corollary}
\label{cor:subcomplex of arc complex}
The complexes $\mathcal{A}_{g, n}$ and $\mathcal{A}^*_{g, n}$ are  contractible.
\end{corollary}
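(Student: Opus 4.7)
The strategy is to apply Theorem \ref{thm:contractibility of arc complexes}: it suffices to verify that both $\mathcal{A}_{g, n}$ and $\mathcal{A}^*_{g, n}$ satisfy the Surgery Property. Fix representative arcs $\alpha$ and $\alpha_0$ in transverse and minimal position with $\alpha \cap \alpha_0 \neq \emptyset$, and let $\beta$ be an outermost subarc of $\alpha_0$ cut off by $\alpha \cap \alpha_0$. Write $p = \beta \cap \alpha$, let $b \in \partial \Sigma_{g, n}$ be the other endpoint of $\beta$, and let $a_1, a_2$ denote the endpoints of $\alpha$. The two surgery arcs are then $\alpha' = \beta \cup \beta'$ and $\alpha'' = \beta \cup \beta''$, with endpoint pairs $\{b, a_1\}$ and $\{b, a_2\}$ respectively, where $\beta', \beta''$ are the two components of $\alpha \setminus \{p\}$.

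For $\mathcal{A}_{g, n}$ I would argue by contradiction. If both $\alpha'$ and $\alpha''$ were inessential, each would cobound a disk with a subarc of $\partial \Sigma_{g, n}$, and these two disks share $\beta$ along their boundaries. Gluing them along $\beta$ produces a disk exhibiting $\alpha$ itself as boundary-parallel, contradicting the essentiality of $\alpha$. Any extraneous interior intersections of these disks with $\alpha_0$ can be eliminated by an innermost-bigon reduction, using the minimality of $|\alpha \cap \alpha_0|$. This is essentially Hatcher's original surgery argument from \cite{Hat91}, and it is the step that carries all of the analytic content.

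For $\mathcal{A}^*_{g, n}$ the verification is purely combinatorial. Since $\alpha$ represents a vertex of $\mathcal{A}^*_{g, n}$, the endpoints $a_1, a_2$ lie on distinct boundary components $C_1 \neq C_2$. The point $b$ lies on some boundary component $C_0$, and $C_0$ cannot coincide with both $C_1$ and $C_2$ simultaneously. Hence at least one of the pairs $\{b, a_1\}$, $\{b, a_2\}$ consists of endpoints on two distinct boundary components, so at least one of $\alpha', \alpha''$ connects distinct boundary components of $\Sigma_{g, n}$. Any such arc is automatically essential, because every boundary-parallel arc has both endpoints on a single boundary component; therefore it represents a vertex of $\mathcal{A}^*_{g, n}$, and the Surgery Property holds.

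The main obstacle is the essentiality argument for $\mathcal{A}_{g, n}$, where one must carefully control how the two hypothetical outermost disks fit together along $\beta$ and interact with the remaining intersections with $\alpha_0$. For $\mathcal{A}^*_{g, n}$ the boundary-component bookkeeping sidesteps this issue entirely, which is why that case reduces to a one-line observation.
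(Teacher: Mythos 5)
Your proposal follows the same route as the paper: Corollary~\ref{cor:subcomplex of arc complex} is obtained by verifying the Surgery Property and invoking Theorem~\ref{thm:contractibility of arc complexes}, and the paper itself supplies no further detail beyond asserting that the verification is easy. Your two verifications are both correct---the contradiction argument for $\mathcal{A}_{g,n}$ is the standard Hatcher-style observation that if both surgery arcs $\alpha'$ and $\alpha''$ were boundary-parallel, the two parallelism disks together with the hexagonal neighborhood of $\alpha\cup\beta$ would exhibit $\alpha$ as boundary-parallel, and the boundary-component bookkeeping for $\mathcal{A}^*_{g,n}$ is exactly what is needed. One small remark: the sentence about eliminating ``extraneous interior intersections of these disks with $\alpha_0$ by an innermost-bigon reduction'' is superfluous, since whether $\alpha'$ or $\alpha''$ is boundary-parallel is a property of the arc alone and does not involve $\alpha_0$ at all; minimality of $|\alpha\cap\alpha_0|$ is used elsewhere (in the proof of Theorem~\ref{thm:contractibility of arc complexes}) but is not needed for the Surgery Property verification itself.
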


We end the section with the following lemma for later use.

\begin{lemma}
\label{lem:dimensions of arc complexes}
The dimension of the complex $\mathcal{A}^*_{g, 2}$ is $4g - 1$.
\end{lemma}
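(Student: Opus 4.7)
The plan is to identify the dimension of $\mathcal{A}^*_{g, 2}$ with one less than the maximum size of a collection of pairwise disjoint, pairwise non-isotopic essential arcs in $\Sigma_{g, 2}$ each connecting the two different boundary components (call them $C_1$ and $C_2$), and to show that this maximum equals $4g$.

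For the upper bound, given such a collection $\{\alpha_1, \ldots, \alpha_k\}$, I would cut $\Sigma_{g, 2}$ along the arcs to obtain a surface $\Sigma'$ with $\chi(\Sigma') = -2g + k$. On any boundary circle of $\Sigma'$, the edges alternate between ``arc-sides'' and pieces of $\partial \Sigma_{g, 2}$, since at every arc endpoint an arc meets the boundary. Labelling each vertex of $\partial \Sigma'$ by which of $C_1, C_2$ contains it, the label switches along an arc-side (since each $\alpha_i$ connects $C_1$ to $C_2$) and is preserved along a boundary-segment; hence the labels around any boundary circle follow the $4$-periodic pattern $C_1, C_2, C_2, C_1, C_1, C_2, C_2, C_1, \ldots$. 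Consequently every boundary circle of $\Sigma'$ has a number of edges divisible by $4$; moreover a boundary circle of length $4$ bounding a disk component would exhibit a disk in $\Sigma_{g,2}$ cobounded by two of the $\alpha_i$ together with two boundary segments, making those two arcs isotopic and contradicting the hypothesis. Thus every disk component of $\Sigma'$ has at least $8$ boundary edges. Letting $m$ denote the number of disk components of $\Sigma'$, non-disk components contribute non-positively to $\sum \chi(R_i) = -2g + k$, giving $m \geq k - 2g$; counting the $4k$ total boundary edges of $\Sigma'$ then gives $4k \geq 8m$, i.e., $m \leq k/2$. Combining, $k - 2g \leq m \leq k/2$, so $k \leq 4g$.

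For the matching lower bound, I would realize $\Sigma_g$ as a $4g$-gon $P$ with the standard side-identifications $a_1 b_1 a_1^{-1} b_1^{-1} \cdots a_g b_g a_g^{-1} b_g^{-1}$, under which all $4g$ vertices of $P$ collapse to a single point $v \in \Sigma_g$. Taking $D_1$ a small open disk in the interior of $P$ and $D_2$ a small open disk neighborhood of $v$, set $\Sigma_{g, 2} = \Sigma_g \setminus (D_1 \cup D_2)$. For each of the $4g$ corners of $P$, draw a disjoint embedded arc in $P$ running from $\partial D_1$ to $\partial D_2$ past that corner; these project to $4g$ pairwise disjoint arcs in $\Sigma_{g, 2}$ each joining $C_1$ to $C_2$, and any two of them are non-isotopic because their concatenation is freely homotopic into $\partial P$ and represents a non-trivial element of $\pi_1(\Sigma_g)$, namely a non-trivial product of the standard generators corresponding to the sides of $P$ between the two corners.

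The main obstacle is the upper-bound bookkeeping: length-$4$ boundary circles are excluded only for disk components, so one must allow non-disk complementary regions and extract the bound $k \leq 4g$ from the combined Euler characteristic and edge-count inequalities, rather than from a naive ``all regions are octagons'' argument.
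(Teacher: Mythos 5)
Your argument is correct, and it differs in its bookkeeping from the paper's, although both are Euler-characteristic arguments at heart. The paper contracts the two boundary circles of $\Sigma_{g,2}$ to marked points $v^+, v^-$, takes a \emph{maximal} collection of arcs, asserts that the complementary regions are quadrilaterals with vertices alternating $v^+, v^-$, and then solves $V - E + F = 2 - 2g$ with $V = 2$, $F = E/2$ to get $E = 4g$. You instead cut $\Sigma_{g,2}$ along an \emph{arbitrary} $k$-arc collection, combine $\chi(\Sigma') = -2g + k$ with a lower bound on the combinatorial length of disk-component boundaries to get $k \leq 4g$, and then exhibit $4g$ arcs explicitly from the $4g$-gon model of $\Sigma_g$. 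This buys you an argument that does not rely on the (unproved, in the paper) claim that a maximal collection yields only quadrilateral complementary regions, and it supplies a lower-bound construction that the paper leaves implicit in the word ``maximal.'' One small gap worth closing: when you say a length-$4$ boundary circle of a disk component $D$ witnesses an isotopy between ``two of the $\alpha_i$,'' you should rule out the possibility that the two arc-sides on $\partial D$ are the two sides of the \emph{same} arc $\alpha_i$. In that case gluing $D$ back along $\alpha_i$ produces a subsurface whose entire boundary already lies in $\partial \Sigma_{g,2}$, so by connectivity it would equal $\Sigma_{g,2}$ and force $\Sigma_{g,2}$ to be an annulus, contradicting $g \geq 1$; once this degenerate case is dismissed, your argument goes through as written.
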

\begin{proof}
Let $A = \{ \alpha_1 , \alpha_2, \ldots, \alpha_n \}$ be
a maximal set of mutually disjoint, mutually non-isotopic simple arcs connecting
the different components of the boundary of $\Sigma_{g, 2}$.
By contracting each of these boundary components of $\Sigma_{g, 2}$ into a point,
we get a closed orientable surface $\Sigma$ of genus $g$ with 2 dots, say $v^+$ and $v^-$.
On this surface, each of the arcs of $A$ connects the two dots.
Hence $A$ decomposes $\Sigma$ into cubes with the vertex sets $\{ v^+ , v^- \}$.
Now, the assertion follows easily from Euler characteristic considerations.
\end{proof}

\section{Sphere complexes}
\label{sec:Sphere complexes}

Let $(V, W; \Sigma)$ be
a Heegaard splitting of genus $g \geq 2$ of a closed orientable 3-manifold $M$.
A separating sphere $P$ embedded in $M$ is called a {\it Haken sphere} for the spitting if
it intersects $\Sigma$ transversely
in a single essential simple closed curve.
Since $P$ is separating in $M$, the curve $P \cap \Sigma$
is separating in $\Sigma$.
Two Haken spheres $P$ and $Q$ are said to be {\it equivalent} if
$P \cap \Sigma$  and $Q \cap \Sigma$ are isotopic in $\Sigma$.
We denote by $\mu = \mu (V, W; \Sigma)$ the minimal cardinality of
$P \cap Q \cap \Sigma$,
where $P$ and $Q$ vary over all pairwise non-equivalent Haken spheres for $(V, W; \Sigma)$.
We note that $\mu$ is a non-negative even number.
It was shown in \cite{ST03} that $\mu (V, W; \Sigma) = 4$ when
the genus of the splitting is 2.
When the given splitting $(V, W; \Sigma)$ admits Haken spheres, the {\it sphere complex} for the splitting is defined as in Introduction, which we will denote by $\mathcal H = \mathcal H(V, W; \Sigma)$.

Given a closed orientable surface $\Sigma$ of genus $g \geq 1$, the {\it curve complex}  $\mathcal C_g$ is defined to be the simplicial complex whose vertices are isotopy classes of simple closed curves in $\Sigma$ and whose $k$-simplices are collections of $k+1$ vertices  represented by pairwise disjoint and non-isotopic curves in $\Sigma$.
It is known that the curve complex $\mathcal C_g$ is connected and $(3g-4)$-dimensional.
When the surface is the Heegaard surface of a Heegaard splitting $(V, W; \Sigma)$ of a 3-manifold,
we have the two full subcomplexes $\mathcal D_V$ and $\mathcal D_W$ of $\mathcal C_g$ which are spanned by the vertices of the simple closed curves bounding disks in $V$ and $W$, respectively.
Then we define the {\it Hempel distance} of the splitting to be the minimal simplicial distance in $\mathcal C_g$ between the two subcomplexes $\mathcal D_V$ and $\mathcal D_W$.
That is, the minimal number of edges among all the paths in $\mathcal C_g$ from a vertex of $\mathcal D_V$ to a vertex of $\mathcal D_W$.
We refer \cite{Hem01} to the reader for details on the Hempel distance.
In the case of genus-1 Heegaard splitting for a 3-manifold, we have the Hempel distance 0 if the manifold is $S^2 \times S^1$, and the distance is $\infty$ otherwise.

Let $(V, W; \Sigma)$ be a Heegaard splitting of a closed orientable 3-manifold $M$.
A non-separating disk $E_0$ in $V$ is called a {\it reducing disk}
if $\partial E_0$ bounds a disk in $W$.
We note that if there exists a reducing disk in $M$, then $M$ has
an $S^2 \times S^1$ summand for its prime decomposition, and vice versa by 
Waldhausen's uniqueness of Heegaard splittings of $S^2 \times S^1$ \cite{Wal68} and Haken's lemma \cite{Hak68}.
Given any simple closed curve $\gamma$ in $\Sigma$ intersecting $\partial E_0$ transversely in a single point, the boundary of $\Nbd(\partial E_0 \cup \gamma; \Sigma)$ is a separating simple closed curve in $\Sigma$ which bound a disk in each of $V$ and $W$.
Thus, if the genus of the splitting is greater than 1, such a simple closed curve $\gamma$ determines a Haken sphere $P = P(\gamma)$ for the splitting, the union of those two disks in $V$ and $W$.

\begin{lemma}
\label{lem:transitivity of Goeritz group on the set of Haken spheres}
Let $(V, W; \Sigma)$ be a genus-$g$ Heegaard splitting of a $3$-manifold $M$, where $g \geqslant 2$.
Let $E_0$ be a reducing disk in $V$.
Let $\gamma$ and $\gamma'$ be simple closed curves each of which intersects
$\partial E_0$ transversely in a single point.
Let $P= P(\gamma)$ and $P'=P'(\gamma')$ be Haken spheres determined by the curves $\gamma$ and $\gamma'$, respectively.
Then there exists an orientation-preserving homeomorphism of the manifold $M$ onto itself
that maps $P$ to $P'$ while preserving each of $V$ and $W$ setwise.
\end{lemma}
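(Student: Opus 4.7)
The plan is to cut $M$ along the reducing 2-sphere associated to $E_0$, apply a change-of-coordinates argument on the cut manifold, and reglue. Let $D_0 \subset W$ be the disk with $\partial D_0 = \partial E_0$, so that $S_0 := E_0 \cup D_0$ is a non-separating 2-sphere in $M$. I would set $V' := V \setminus \Nbd(E_0)$ and $W' := W \setminus \Nbd(D_0)$ (both handlebodies of genus $g-1$), and $\Sigma' := \Sigma \setminus \Nbd(\partial E_0)$, a surface of genus $g-1$ with two boundary circles $\partial^+$ and $\partial^-$. After a small isotopy we may assume $\gamma$ and $\gamma'$ meet $\partial E_0$ transversely in exactly one point, and thus descend to properly embedded arcs $\tilde\gamma, \tilde\gamma' \subset \Sigma'$ each connecting $\partial^+$ to $\partial^-$.

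Both arcs have the same topological type in $\Sigma'$: cutting along either yields a connected genus-$(g-1)$ surface with a single boundary circle (by a straightforward Euler characteristic count). By the classical change-of-coordinates principle for surfaces, there exists a self-homeomorphism $\phi$ of $\Sigma'$, restricting to the identity on a collar of $\partial \Sigma'$, with $\phi(\tilde\gamma) = \tilde\gamma'$ up to ambient isotopy rel $\partial \Sigma'$. I would then upgrade $\phi$ to an orientation-preserving homeomorphism $\Phi$ of the cut manifold $M' = V' \cup_{\Sigma'} W'$ that preserves $V'$ and $W'$ setwise and restricts to the identity on a collar of $\partial M' = S_0^+ \cup S_0^-$. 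Given such a $\Phi$, it descends through the regluing $M' \to M$ (identifying $S_0^+$ with $S_0^-$) to an orientation-preserving homeomorphism $h$ of $M$ preserving $V$ and $W$ setwise and sending $\gamma$ to $\gamma'$ up to isotopy in $\Sigma$. In particular $h$ sends $\Nbd(\partial E_0 \cup \gamma; \Sigma)$ to $\Nbd(\partial E_0 \cup \gamma'; \Sigma)$, and hence $h(P) = P'$ after isotoping the disks $P \cap V$ and $P \cap W$ (which are determined up to isotopy by their boundary curves on $\Sigma$).

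The main obstacle is the simultaneous extension of $\phi$ over both handlebodies $V'$ and $W'$ while respecting the distinguished disks $E_0^\pm$ on $\partial V'$ and $D_0^\pm$ on $\partial W'$. Extending by the identity on $E_0^\pm$ produces a homeomorphism of the closed surface $\partial V'$ which then must lie in the handlebody mapping class group of $V'$; analogously for $W'$. I plan to arrange this as follows: view $\tilde\gamma$, closed off by fixed arcs through $E_0^\pm$ (resp.\ $D_0^\pm$), as a non-separating simple closed curve on the closed genus-$(g-1)$ surface $\partial V'$ (resp.\ $\partial W'$); the change-of-coordinates principle for handlebodies acts transitively on such curves. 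The subtle point is to coordinate the two extensions so as to land in the intersection, which I would handle by building $\Phi$ explicitly from elementary Goeritz moves — in particular Dehn twists along curves bounding disks in both $V$ and $W$ (notably $\partial E_0$ itself) and "slide" moves associated with the pair $(E_0, D_0)$, both of which automatically respect the handlebody decomposition of $M'$.
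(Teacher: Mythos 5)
Your setup mirrors the paper's: cut along $\partial E_0$ to pass from the closed curves $\gamma, \gamma'$ on $\Sigma$ to arcs on the surface $\Sigma'\cong\Sigma_{g-1,2}$ joining the two boundary circles, then try to realize the corresponding arc replacement by a homeomorphism preserving the handlebodies. But the crucial step --- upgrading the surface homeomorphism $\phi$ of $\Sigma'$ to a homeomorphism $\Phi$ of $M'$ (or of $M$) preserving both $V$ and $W$ --- is left unproved. You correctly identify this as ``the main obstacle,'' and correctly observe that a generic change-of-coordinates homeomorphism of a Heegaard surface does not extend to either handlebody, let alone both. The closing paragraph, however, only gestures at a fix: ``building $\Phi$ explicitly from elementary Goeritz moves'' is precisely what the lemma requires and cannot simply be asserted. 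The orbit of an arc type under $\MCG_+(\Sigma')$ is certainly a single class, but the orbit under the subgroup of maps extending to both handlebodies is what is at stake, and that is not addressed.

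The missing ingredient, which the paper supplies, is a reduction via the \emph{connectivity of the arc complex $\mathcal{A}^*_{g-1,2}$}. One first handles the special case where the arcs $\tilde\gamma$ and $\tilde\gamma'$ are disjoint: in that case the union of $\gamma\cup\gamma'$ with two arcs of $\partial\Nbd(\partial E_0;\Sigma)$ forms a pair of simple closed curves bounding properly embedded annuli $A\subset V$ and $B\subset W$, and a single twist along the torus $A\cup B$ is explicitly a homeomorphism of $M$ preserving $V$ and $W$ that carries $P(\gamma)$ to $P(\gamma')$; this is the concrete ``slide'' move you allude to. Then, for arbitrary $\tilde\gamma$ and $\tilde\gamma'$, one interpolates a chain $\tilde\gamma=\beta_1,\beta_2,\dots,\beta_n=\tilde\gamma'$ of arcs with consecutive members disjoint, using the connectivity of $\mathcal{A}^*_{g-1,2}$ (Corollary \ref{cor:subcomplex of arc complex}), and composes the resulting slides. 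Without this reduction and the explicit construction in the disjoint case, the extension step has no proof, and the argument does not establish the lemma.
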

\begin{proof}
If $\gamma'$ is isotopic to $\gamma$ up to Dehn twists about $\partial E_0$, 
$P$ and $P'$ are equivalant, thus there is nothing to prove. 
Otherwise, suppose first that $\gamma'$ is disjoint from $\gamma$ up to 
Dehn twists about $\partial E_0$. 
We may assume without loss of generality that $\gamma'$ itself is disjoint from $\gamma$ because 
Dehn twists about $\partial E_0$ does not change the equivalence class of $P'$. 
The boundary of $\Nbd (\partial E_0 ; \Sigma)$ consists of two simple closed curves 
$\delta_1$ and $\delta_2$. 
For each $i \in \{1, 2\}$, 
the intersection of $\delta_i$ and $\gamma_1 \cup \gamma_2$ cuts $\delta_i$ into two arcs 
$\delta_{i,1}$ and $\delta_{i,2}$.  
We set $\alpha_i = ((\gamma_1 \cup \gamma_2) \setminus \Nbd (\partial E_0 ; \Sigma)) 
\cup \delta_{1,1} \cup \delta_{2, i}$. 
We note the union $\alpha_1 \cup \alpha_2$ bounds proper annuli $A$ and $B$ 
in $V$ and $W$, respectively. 
Then a single Dehn twist about the annulus $A$, which extends to the Dehn twist about 
the torus $A \cup B$, is the required homeomorphism of $M$. 
Here we remark that this homeomorphism is actually a ``sliding" of a foot of the $1$-handle of 
each of $V$ and $W$ whose belt sphere is $\partial E_0$.  

The general case follows from the connectivity of the arc complex as follows. 
The simple closed curve $\partial E_0$ cuts $\Sigma$ into a genus-$(g-1)$ surface $\Sigma_{g-1, 2}$
with 2 holes $\partial E_0^+$ and $\partial E_0^-$ coming from $\partial E_0$.
On the surface $\Sigma_{g-1, 2}$, $\gamma$ and $\gamma'$ are simple arcs 
$\beta$ and $\beta'$ connecting the two holes.
Since the complex $\mathcal A^*_{g-1, 2}$ is connected by Corollary \ref{cor:subcomplex of arc complex}, 
there exists a sequence $\beta = \beta_1, \beta_2, \ldots, \beta_n = \beta'$ of mutually non-isotopic, 
essential arcs in $\Sigma_{g-1, 2}$ connecting $\partial E_0^+$ and $\partial E_0^-$ such that $\beta_i$ is disjoint from $\beta_{i+1}$ for each $i \in \{1, 2, \ldots , n-1\}$. 
Gluing $\partial E_0^+$ and $\partial E_0^-$ back, this sequence gives rise to 
a sequence of 
simple closed curves $\gamma = \gamma_1, \gamma_2, \ldots, \gamma_n = \gamma'$ such that 
$\gamma_{i+1}$ is disjoint from $\gamma_{i}$ up to 
Dehn twists about $\partial E_0$ for each $i \in \{1, 2, \ldots , n-1\}$. 
Then there exists an orientation-preserving homeomorphism $g_i$ of the manifold $M$ onto itself
that maps $P(\gamma_i)$ to $P(\gamma_{i+1})$ while preserving each of $V$ and $W$ setwise. 
Then the composition $g_{n-1}g_{i-2}\cdots g_1$ is the desired homeomorphism.
\end{proof}

Let $(V, W; \Sigma)$ be a genus-$g$ Heegaard splitting of a 3-manifold $M$ with $g \geq 2$.
Suppose that there exists a reducing disk $E_0$ in $V$. 
We denote by $\mathcal{H}_{E_0}$ the simplicial complex
whose vertices are equivalence classes of
Haken spheres $P = P(\gamma)$ determined by simple closed curves $\gamma$ in $\Sigma$ intersecting $\partial E_0$ transversely in a single point, and whose $k$-simplices are collections of $k+1$ vertices represented by pairwise non-equivalent Haken spheres $P(\gamma_0), P(\gamma_1), \ldots, P(\gamma_k)$ such that the minimal cardinality of each $P(\gamma_i) \cap P(\gamma_j) \cap \Sigma$ is $4$, for $0 \leq i < j \leq k$ (equivalently the arcs $\gamma_i$ and $\gamma_j$ are disjoint from each other).
We observe that a Haken sphere $P$ represents a vertex of $\mathcal{H}_{E_0}$ if and only if $P$ cuts off from $V$ a solid torus whose meridian disk is $E_0$.
By construction, if $\mu (V, W; \Sigma) = 4$, then the complex $\mathcal{H}_{E_0}$ is a full subcomplex of the sphere complex $\mathcal{H}$ of the splitting $(V, W; \Sigma)$.
The following lemma is immediate from the definition of the complex $\mathcal H_{E_0}$ with Corollary \ref{cor:subcomplex of arc complex} and Lemma \ref{lem:dimensions of arc complexes}.

\begin{lemma}
\label{lem:contractibility of sphere compex}
Let $(V, W; \Sigma)$ be a genus-$g$ Heegaard splitting of a closed orientable $3$-manifold $M$ with $g \geqslant 2$.
Let $E_0$ be a reducing disk in $V$.
Then the complex $\mathcal{H}_{E_0}$
is isomorphic to the complex $\mathcal{A}^*_{g-1, 2}$, and hence it is a contractible, $(4g - 5)$-dimensional complex.
\end{lemma}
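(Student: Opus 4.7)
The proof strategy is to build an explicit simplicial isomorphism $\Phi \colon \mathcal{H}_{E_0} \to \mathcal{A}^*_{g-1, 2}$ and then invoke Corollary \ref{cor:subcomplex of arc complex} for contractibility and Lemma \ref{lem:dimensions of arc complexes} (applied with genus $g - 1$) for the dimension $4(g-1) - 1 = 4g - 5$. Since $E_0$ is a reducing disk, $\partial E_0$ is a non-separating simple closed curve in $\Sigma$, and cutting $\Sigma$ along $\partial E_0$ yields a copy of $\Sigma_{g-1, 2}$ whose boundary circles $\partial E_0^+$ and $\partial E_0^-$ are the two sides of the cut. A representative curve $\gamma$ of a vertex of $\mathcal{H}_{E_0}$ meets $\partial E_0$ transversely in a single point, so the cut turns $\gamma$ into an essential simple arc $\beta$ in $\Sigma_{g-1, 2}$ joining $\partial E_0^+$ to $\partial E_0^-$. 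I would set $\Phi([P(\gamma)]) = [\beta]$.

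The core task is to verify that $\Phi$ is a well-defined bijection on vertices. On the one hand, modifying $\gamma$ by a Dehn twist about $\partial E_0$ changes neither $[P(\gamma)]$, because $P(\gamma) \cap \Sigma$ is the boundary of the once-punctured torus $\Nbd(\partial E_0 \cup \gamma; \Sigma)$ and is therefore disjoint from $\partial E_0$, nor $[\beta]$, because the Dehn twist amounts to sliding the two endpoints of $\beta$ around $\partial E_0^\pm$. On the other hand, if $[P(\gamma)] = [P(\gamma')]$, one can ambient-isotope $\Sigma$ so that the once-punctured tori $\Nbd(\partial E_0 \cup \gamma; \Sigma)$ and $\Nbd(\partial E_0 \cup \gamma'; \Sigma)$ coincide with $\partial E_0$ a fixed curve inside; the standard classification of essential simple closed curves in a once-punctured torus by primitive homology classes then forces $\gamma$ and $\gamma'$ to be isotopic modulo powers of the Dehn twist about $\partial E_0$, so $[\beta] = [\beta']$. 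Surjectivity is immediate, since any arc representing a vertex of $\mathcal{A}^*_{g-1, 2}$ glues back under the identification $\partial E_0^+ \sim \partial E_0^-$ to a curve $\gamma$ in $\Sigma$ giving a Haken sphere $P(\gamma)$.

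Finally, I would check that $\Phi$ preserves simplices in both directions. A collection $\{[P(\gamma_0)], \ldots, [P(\gamma_k)]\}$ spans a simplex of $\mathcal{H}_{E_0}$ precisely when the $\gamma_i$ admit pairwise disjoint representatives (up to Dehn twists about $\partial E_0$), and after cutting along $\partial E_0$ this is exactly the condition that the corresponding arcs $\beta_i$ admit pairwise disjoint representatives in $\Sigma_{g-1, 2}$, i.e.\ that $\{[\beta_0], \ldots, [\beta_k]\}$ spans a simplex of $\mathcal{A}^*_{g-1, 2}$. Once $\Phi$ is established as a simplicial isomorphism, Corollary \ref{cor:subcomplex of arc complex} delivers contractibility of $\mathcal{H}_{E_0}$ and Lemma \ref{lem:dimensions of arc complexes} delivers the dimension. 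The main step to write out with care is the uniqueness-up-to-twists statement inside the once-punctured torus used in the well-definedness of $\Phi$; everything else is a routine translation between the two combinatorial descriptions.
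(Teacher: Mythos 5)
Your construction of the explicit simplicial isomorphism $\Phi\colon\mathcal{H}_{E_0}\to\mathcal{A}^*_{g-1,2}$ by cutting $\Sigma$ along $\partial E_0$ and sending $[P(\gamma)]$ to the resulting arc class $[\beta]$, followed by the appeals to Corollary \ref{cor:subcomplex of arc complex} and Lemma \ref{lem:dimensions of arc complexes}, is precisely the identification the paper has in mind when it declares the lemma immediate from the definition of $\mathcal{H}_{E_0}$. Your well-definedness and bijectivity checks (invariance of both $[P(\gamma)]$ and $[\beta]$ under Dehn twists about $\partial E_0$, and the classification of essential simple closed curves in the once-punctured torus) correctly supply the routine details that the paper leaves to the reader.
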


\begin{proposition}
\label{prop:H and HE0}
Let $(V, W; \Sigma)$ be a genus-$g$ Heegaard splitting of a closed orientable $3$-manifold $M$ 
with $g \geqslant 2$.
Suppose that there exists a unique reducing disk $E_0$ in $V$, and also that $\mu(V, W; \Sigma) > 0$.
Then the sphere complex $\mathcal{H}$ for the splitting $(V, W ; \Sigma)$ coincides with the complex $\mathcal{H}_{E_0}$.
\end{proposition}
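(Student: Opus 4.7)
The plan is to show that every Haken sphere $P$ for $(V, W; \Sigma)$ is equivalent to a Haken sphere of the form $P(\gamma)$ with $\gamma \subset \Sigma$ meeting $\partial E_0$ transversely in a single point. Once this is established, the minimum intersection $\mu$ is realized by pairs $P(\gamma_1), P(\gamma_2)$ with $\gamma_1, \gamma_2$ disjoint, giving $\mu = 4$; then $\mathcal{H}_{E_0}$ is a full subcomplex of $\mathcal{H}$ with the same vertex set, and the two complexes coincide.

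Fix a Haken sphere $P$ and set $c = P \cap \Sigma$, $D = P \cap V$, $D' = P \cap W$. Let $E_0' \subset W$ denote the disk bounded by $\partial E_0$, so that $S_0 = E_0 \cup E_0'$ is the reducing sphere. The main technical step is to show that, after isotoping $c$ on $\Sigma$, we may assume $c \cap \partial E_0 = \emptyset$. I would argue by contradiction: suppose $|c \cap \partial E_0| = 2k > 0$ in minimal position. Make $D$ transverse to $E_0$ in $V$ and eliminate all circles of $D \cap E_0$ via innermost-disk arguments using the irreducibility of $V$, leaving only essential arcs with endpoints on $c \cap \partial E_0$. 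Take an outermost arc $\alpha$ of $D \cap E_0$ on $E_0$, bounding a subdisk $E_0^\alpha \subset E_0$ with $E_0^\alpha \cap D = \alpha$, and perform $\partial$-surgery of $D$ along $E_0^\alpha$. A parallel surgery is performed on $D'$ via an outermost arc of $D' \cap E_0'$ on $E_0'$. By carefully coordinating the two-sided surgeries along $\Sigma$, I would produce a new Haken sphere $P^\star$ equivalent to $P$ with $|c^\star \cap \partial E_0| < |c \cap \partial E_0|$, where $c^\star = P^\star \cap \Sigma$. The key rigidity inputs are the uniqueness of $E_0$ (any reducing disk produced by surgery must be isotopic to $E_0$) and the hypothesis $\mu > 0$ (any newly produced Haken sphere meeting $P$ in fewer than $\mu$ points on $\Sigma$ must be equivalent to $P$). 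Iterating reduces the intersection count to zero, contradicting the assumed positivity.

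Once $c \cap \partial E_0 = \emptyset$, the curve $c$ cuts $\Sigma$ into two subsurfaces $\Sigma_1, \Sigma_2$ of genera $h_1, h_2$ with $h_1 + h_2 = g$, and $\partial E_0$ lies in one of them, say $\Sigma_1$, so $h_1 \geq 1$. Choose a simple closed curve $\gamma \subset \Sigma_1$ meeting $\partial E_0$ transversely in a single point and disjoint from $c$. The Haken sphere $P(\gamma)$ satisfies $P(\gamma) \cap \Sigma = \partial \Nbd(\partial E_0 \cup \gamma; \Sigma) \subset \mathrm{int}(\Sigma_1)$, so it is disjoint from $c$. If $h_1 \geq 2$, this curve bounds a once-punctured torus strictly contained in $\Sigma_1$, hence is not isotopic to $c = \partial \Sigma_1$; then $P \not\sim P(\gamma)$, but $|P \cap P(\gamma) \cap \Sigma| = 0$, contradicting $\mu > 0$. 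Therefore $h_1 = 1$, $\Sigma_1 = \Nbd(\partial E_0 \cup \gamma; \Sigma)$, and $P \sim P(\gamma)$. The principal obstacle is the reduction step: organizing the outermost-arc surgeries on both sides of $\Sigma$ into a Haken sphere equivalent to $P$ is the main technical hurdle, and is where the uniqueness of $E_0$ and the positivity of $\mu$ are decisively used.
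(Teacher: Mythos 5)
Your second half—once $c := P\cap\Sigma$ is disjoint from $\partial E_0$, deducing that $P$ must cut off a solid torus with meridian disk $E_0$ by exhibiting a new Haken sphere disjoint from and inequivalent to $P$ if the subsurface containing $\partial E_0$ has genus $\geqslant 2$, and contradicting $\mu > 0$—matches the paper exactly. The gap is in the first half, the claim that $P$ can be isotoped off $E_0$, where you propose an iterated surgery reduction but flag it yourself as unresolved. The obstacle is real: outermost arcs of $D\cap E_0$ on $E_0$ and outermost arcs of $D'\cap E_0'$ on $E_0'$ have no reason to share a common endpoint pair on $\partial E_0$, so boundary-compressing $D$ along a subdisk of $E_0$ and $D'$ along a subdisk of $E_0'$ need not produce disks whose boundaries glue up to a single curve in $\Sigma$; even if a sphere results, you have no control over whether its trace on $\Sigma$ is a single essential curve, nor whether it is equivalent to $P$ rather than being a new inequivalent Haken sphere. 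The ``coordinated two-sided surgeries'' step is therefore a missing argument, not a routine verification.

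The paper replaces this step with a cleaner global observation that avoids disk surgery altogether. Cutting $M$ along $P$ and capping off yields closed manifolds $M_1$ and $M_2$ with $M = M_1 \# M_2$; by uniqueness of prime decomposition, one of them, say $M_1$, has an $S^2\times S^1$ summand, and hence the induced Heegaard splitting $(V_1,W_1;\Sigma_1)$ of $M_1$ has a reducing disk $E_1$. Pushing $E_1$ off the capping ball exhibits it as a reducing disk for $(V,W;\Sigma)$ that is disjoint from $P$; since $E_0$ was assumed to meet $P$ essentially (and minimally), $E_1$ cannot be isotopic to $E_0$, contradicting uniqueness. Note that this uses \emph{only} the uniqueness of $E_0$ and needs no control over Haken spheres at all; the hypothesis $\mu > 0$ enters solely in the second half. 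Your outline, by contrast, tries to make $\mu > 0$ do double duty in the first half, which is where the argument breaks down. If you want to salvage your route you would need to establish that the surgered object is a Haken sphere equivalent to $P$ with strictly fewer intersections with $E_0$, which in turn requires the matching of arcs on the $V$- and $W$-sides that you do not supply; the prime-decomposition argument sidesteps all of this.
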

\begin{proof}
Let $P$ be a Haken sphere for the splitting $(V, W; \Sigma)$ intersecting $E_0$ transversely and minimally.
Suppose that $P \cap E_0 \neq \emptyset$. 
At least one, say $M_1$, of the closed 3-manifolds $M_1$ and $M_2$ obtained by cutting $M$ along $P$ 
and then capping off the resulting boundary spheres by adding 3-balls has $S^2 \times S^1$ summand for its 
prime decomposition. 
Then as mentioned in the last paragraph before Lemma \ref{lem:transitivity of Goeritz group on the set of Haken spheres}, 
the $V$ part of the Heegaard splitting of $M_1$ naturally induced from $(V, W; \Sigma)$ 
contains a reducing disk, 
which gives rise to a reducing disk of $V$ that not isotopic to $E_0$. 
This contradicts the uniqueness of $E_0$. 
Therefore any Haken sphere is disjoint from the reducing disk $E_0$.
It suffices to show that $P$ cuts off a solid torus from $V$ whose meridian disks is $E_0$.
Suppose not.
That is, the component $\Sigma'$ of $\Sigma$ cut off by $P \cap \Sigma$ containing $\partial E_0$
is a compact surface of genus at least $2$.
Then we can choose a simple closed curve $\gamma$ in $\Sigma'$ intersecting
$\partial E_0$ transversely in a single point such that the Haken sphere $Q = Q(\gamma)$ is disjoint from and is not equivalent to $P$.
We have then $0 < \mu(V, W; \Sigma) \leq |P \cap Q \cap \Sigma| = 0$, a contradiction.
\end{proof}


Now we will construct (infinitely many) Heegaard splittings $(V, W; \Sigma)$
satisfying the conditions in Proposition \ref{prop:H and HE0}:
\begin{itemize}
\item
there exists a unique reducing disk in $V$; and
\item
$\mu (V, W; \Sigma) > 0$.
\end{itemize}

Let $(V_1, W_1; \Sigma_1)$ and  $(V_2, W_2; \Sigma_2)$ be genus-$g_1$ and genus-$g_2$ Heegaard splittings for  $3$-manifolds $M_1$ and $M_2$, respectively.
Let $B_1$ and $B_2$ be 3-balls in $M_1$ and $M_2$  which intersect the Heegaard surfaces $\Sigma_1$ and $\Sigma_2$ in a single disk, respectively.
Removing the interiors of $B_1$ and $B_2$, and identifying $\partial B_1$ and $\partial B_2$, we can construct a genus-$(g_1 + g_2)$ Heegaard splitting $(V, W ; \Sigma)$ for the connected sum $M = M_1 \# M_2$ such that $V$ and
$W$ are considered as boundary connected sums of $V_1$ and $V_2$, and $W_1$ and $W_2$,  respectively.
We call the splitting $(V, W ; \Sigma)$ a Heegaard splitting for $M$ {\it obtained from $(V_1, W_1; \Sigma_1)$ and $(V_2, W_2; \Sigma_2)$}.
We note that the sphere $P = \partial B_1 = \partial B_2$ is a Haken sphere for the splitting $(V, W ; \Sigma)$.
In the remaining of the section, we always assume the following:

\begin{itemize}
\item
$(V_1, W_1; \Sigma_1)$ is a genus-$(g-1)$ Heegaard splitting for a closed orientable 3-manifold $M_1$,
with $g \geqslant 2$, and $(V_2, W_2; \Sigma_2)$ is the genus-$1$ Heegaard splitting for $S^2 \times S^1$.
\item
$(V, W ; \Sigma)$ is a genus-$g$ Heegaard splitting for $M = M_1 \# (S^2 \times S^1)$
obtained from $(V_1, W_1; \Sigma_1)$ and $(V_2, W_2; \Sigma_2)$ by the above construction, and $P = \partial B_1 = \partial B_2$
is the Haken sphere for the splitting $(V, W ; \Sigma)$.
\item
$E_0$ and $E'_0$ with $\partial E_0 = \partial E'_0$ are meridian disks of the solid tori $V_2$ and $W_2$ respectively, which are reducing disks for the splitting $(V, W; \Sigma)$.
\end{itemize}

We start with the following two lemmas.

\begin{lemma}
\label{lem:non-separating disk disjoint from a reducing disk}
Let $\delta$ be an essential simple closed curve in $\Sigma$ that is disjoint from and not isotopic to $\partial E_0$.
If $\delta$ bounds disks in $V$ and $W$ simultaneously, then the Hempel distance of the splitting $(V_1, W_1; \Sigma)$ is $0$.
\end{lemma}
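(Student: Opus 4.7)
The plan is to use the given disks $D\subset V$ and $D'\subset W$ bounded by $\delta$ to produce an essential simple closed curve on $\Sigma_1$ that lies in $\mathcal{D}_{V_1}\cap\mathcal{D}_{W_1}$, thereby directly witnessing Hempel distance $0$ for $(V_1,W_1;\Sigma_1)$. In effect, $\delta$ itself, once transported across the reducing disk, should be the distance-$0$ witness.

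For the setup, since $\partial D=\partial D'=\delta$ is disjoint from $\partial E_0=\partial E_0'$, I would apply standard innermost-disk and outermost-arc surgeries in $V$ and $W$ to assume that $D\cap E_0=\emptyset$ and $D'\cap E_0'=\emptyset$. Cutting $V$ along $E_0$ turns the solid-torus factor $V_2$ of $V=V_1\natural V_2$ into a $3$-ball, so $V\setminus\Nbd(E_0)\cong V_1\natural B^3\cong V_1$, with boundary naturally identified with $\Sigma_1$; concretely this presentation realizes $\Sigma_1$ as $(\Sigma\setminus\Nbd(\partial E_0))\cup D_1\cup D_2$, where $D_1$ and $D_2$ are the two new cap disks coming from $E_0$. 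The analogous description applies to $W$ and $E_0'$. Under these identifications, $D$ becomes a disk in $V_1$, $D'$ a disk in $W_1$, and $\delta$ a simple closed curve on $\Sigma_1$.

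The crux is to verify that $\delta$ is essential in $\Sigma_1$; the conclusion is then immediate. I would argue by contradiction: assume $\delta$ bounds a disk $\tilde D\subset\Sigma_1$ and analyze $\tilde D\cap(D_1\cup D_2)$, which after isotopy consists of a (possibly empty) union of caps. If $\tilde D$ contains no cap, then $\tilde D\subset\Sigma$, contradicting essentiality of $\delta$ in $\Sigma$. If $\tilde D$ contains exactly one cap, say $D_1$, then $\tilde D\setminus D_1^\circ$ is an annulus in $\Sigma_{g-1,2}$ cobounding $\delta$ and one side of $\partial E_0$, contradicting the hypothesis that $\delta$ is not isotopic to $\partial E_0$. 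If $\tilde D$ contains both caps, then $\tilde D\setminus(D_1^\circ\cup D_2^\circ)$ is a pair of pants in $\Sigma_{g-1,2}$ with boundary $\delta\cup c_1\cup c_2$; regluing $c_1$ to $c_2$ to recover $\Sigma$ turns this pair of pants into a once-punctured torus in $\Sigma$ bounded by $\delta$, which forces $\delta$ to be separating in $\Sigma$. Since the lemma, as its label indicates and its intended use demands, concerns a curve $\delta$ that is the boundary of a non-separating disk in $V$ (equivalently, non-separating in $\Sigma$), this last case is also excluded.

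The main obstacle will be precisely this ``both caps'' case, where essentiality in $\Sigma$ alone is not enough: one must use the non-separating hypothesis to rule out a hidden once-punctured torus in $\Sigma$ cobounded by $\delta$ and threaded by $\partial E_0$. Once that case is eliminated, $\delta$ is essential in $\Sigma_1$, and the disks $D\subset V_1$ and $D'\subset W_1$ immediately place $\delta$ in $\mathcal{D}_{V_1}\cap\mathcal{D}_{W_1}$, yielding Hempel distance $0$ for the splitting $(V_1,W_1;\Sigma_1)$.
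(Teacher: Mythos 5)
Your argument is correct and takes a genuinely different route from the paper's, and it is arguably cleaner. The paper's proof fixes an auxiliary simple closed curve $\gamma$ meeting $\partial E_0$ once and minimizing $|\gamma\cap\delta|$, invokes Lemma~\ref{lem:transitivity of Goeritz group on the set of Haken spheres} to move the Haken sphere $P$ to $P(\gamma)$, and then splits into two cases depending on whether $\gamma\cap\delta$ is empty or a single point; in the latter case it replaces $\delta$ by a component of $\partial\Nbd(\partial E_0\cup\delta\cup\gamma;\Sigma)$. You instead surger $D$ and $D'$ to miss $E_0$ and $E_0'$, cut directly along the reducing sphere $E_0\cup E_0'$, cap the two resulting boundary circles of $\Sigma_{g-1,2}$ by disks to recover $\Sigma_1$, and then test essentiality of $\delta$ in $\Sigma_1$ by a trichotomy on how many of the two cap disks a hypothetical compressing disk $\tilde D\subset\Sigma_1$ would contain. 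This bypasses the paper's appeal to Lemma~\ref{lem:transitivity of Goeritz group on the set of Haken spheres} and the two-case split; the one thing you use implicitly, which is also implicit in the paper, is that the splitting of $M_1$ produced by this cut-and-cap is isomorphic to $(V_1,W_1;\Sigma_1)$, so that computing Hempel distance $0$ for the former computes it for the latter. A one-line remark to that effect would make the argument watertight.

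Your discussion of the ``both caps'' case is a genuine catch, not a defect. If $\tilde D$ contains both caps, then $\delta$ cobounds a once-punctured torus with $\partial E_0$ inside $\Sigma$; in particular $\delta$ is separating, is isotopic to a Haken sphere curve $P(\gamma)\cap\Sigma$, and becomes inessential in $\Sigma_1$. The lemma as literally stated does not forbid this, and indeed in that situation the Hempel distance of $(V_1,W_1;\Sigma_1)$ can be anything (already for $g=2$ and $M_1=S^3$). The paper's own proof passes over this: when $\gamma$ is disjoint from $\delta$ it simply asserts ``Apparently, $\delta$ remains to be essential in $\Sigma_1$,'' which fails precisely in the configuration you isolate. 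The label \emph{non-separating disk disjoint from a reducing disk}, and every use of the lemma in the remark following it and in Claims~\ref{claim:subword read off by delta11}--\ref{claim:subword read off by delta1i}, is for $\delta$ bounding a non-separating disk in $V$; with the non-separating hypothesis made explicit, your proof is complete and self-contained.
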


From the lemma, it is easy to see that, if the Hempel distance of the splitting $(V_1, W_1; \Sigma_1)$ is at least $1$ and if $E$ is an essential non-separating disk in $V$ that is disjoint from and not isotopic to $E_0$, then $E$ cannot be a reducing disk for the splitting $(V, W; \Sigma)$.

\begin{proof}[Proof of Lemma $\ref{lem:non-separating disk disjoint from a reducing disk}$]
Suppose that $\delta$ in $\Sigma$ bounds disks both in $V$ and $W$.
We want to find an essential simple closed curve in $\Sigma_1$ which bounds disks both in $V_1$ and $W_1$.

Among the simple closed curves in $\Sigma$ that intersect $\partial E_0$ transversely in a single point, choose one, say $\gamma$, so that $\gamma$ intersects $\delta$ minimally.
Then we have that either $\gamma$ is disjoint from $\delta$ or $\gamma$ intersects $\delta$ in a single point.
(If $\delta$ is non-separating and $\delta \cup \partial E_0$ is separating in $\Sigma$, then we have to choose such a curve $\gamma$ so that $\gamma$ intersects $\delta$ in a single point. Otherwise, we can choose $\gamma$ disjoint from $\delta$.)
Let $P(\gamma)$ be the Haken sphere determined by $\gamma$.
That is $P(\gamma) \cap \Sigma$ is the boundary of $\Nbd(\partial E_0 \cup \gamma; \Sigma)$.
Applying Lemma \ref{lem:transitivity of Goeritz group on the set of Haken spheres},
we may assume that the Haken sphere $P (= \partial B_1 = \partial B_2)$ equals $P(\gamma)$, and that $\Nbd(E_0 \cup \gamma; V)$ and $\Nbd(E'_0 \cup \gamma; W)$ are solid tori $V_2$ and $W_2$, respectively, with the interior of the $3$-ball $B_2$ removed.

If $\gamma$ is disjoint from $\delta$, then by isotopy we may assume that $\delta$ lies in $\Sigma_1$ outside the disk $B_1 \cap \Sigma_1$, and that the two disks in $V$ and $W$ bounded by $\delta$ are disjoint from $P$.
Apparently, $\delta$ remains to be essential in $\Sigma_1$.
Thus the Hempel distance of $(V_1, W_1; \Sigma_1)$ is $0$ in this case.
If $\gamma$ intersects $\delta$ in a single point, then we cannot say that $\delta$ lies in $\Sigma_1$.
But by isotopy we may assume that the boundary of $\Nbd(\partial E_0 \cup \delta \cup \gamma; \Sigma)$, which consists of two simple closed curves, lies in $\Sigma_1$ outside the disk $B_1 \cap \Sigma_1$.
Any of the two simple closed curves bound disks in $V$ and $W$, which can be isotoped to be disjoint from $P$.
Since $\delta$ is not isotopic to $\partial E_0$ in $\Sigma$, each of these simple closed curves is essential in $\Sigma_1$.
Again the Hempel distance of $(V_1, W_1; \Sigma)$ is $0$.
\end{proof}
	
\begin{lemma}
\label{lem:disk and a simple closed curve}
Let $\delta_1$ and $\delta_2$ be disjoint, essential simple closed curves in $\Sigma$ each of which is disjoint from and not isotopic to $\partial E_0$.
If $\delta_1$ bounds a disk in $V$ and $\delta_2$ bounds a disk in $W$, then the Hempel distance of the splitting $(V_1, W_1; \Sigma)$ is at most $1$.
\end{lemma}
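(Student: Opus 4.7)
The plan is to adapt the argument of Lemma \ref{lem:non-separating disk disjoint from a reducing disk}: normalize the Haken sphere $P$ to be $P(\gamma)$ for a carefully chosen auxiliary curve $\gamma$, then use $\delta_1$ and $\delta_2$ to manufacture disjoint essential curves in $\Sigma_1$ bounding disks in $V_1$ and $W_1$ respectively. First, I will choose a simple closed curve $\gamma \subset \Sigma$ intersecting $\partial E_0$ transversely in a single point and minimizing $|\gamma \cap (\delta_1 \cup \delta_2)|$. A standard bigon-reduction argument, together with the observation that only one subarc of $\gamma$ between two consecutive intersection points with $\delta_i$ can contain the unique point $\gamma \cap \partial E_0$, forces $|\gamma \cap \delta_i| \leqslant 1$ for each $i \in \{1, 2\}$. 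By Lemma \ref{lem:transitivity of Goeritz group on the set of Haken spheres} I then apply an orientation-preserving self-homeomorphism of $M$ preserving $V$ and $W$ to arrange $P = P(\gamma)$.

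Next, for each $i \in \{1, 2\}$ I will produce a simple closed curve $\delta_i'$ in $\Sigma$ lying in $\Sigma_1 \setminus (B_1 \cap \Sigma_1)$. If $\gamma \cap \delta_i = \emptyset$, set $\delta_i' := \delta_i$ directly; otherwise, form $\delta_i'$ as the band-sum of $\delta_i$ with a parallel push-off of $\partial E_0$ in $\Sigma$ along the subarc of $\gamma$ from $\delta_i \cap \gamma$ to $\partial E_0 \cap \gamma$. In both cases, because each of $\delta_i$ and $\partial E_0$ bound disks on the appropriate side (in $V$ for $i = 1$, in $W$ for $i = 2$), the band-sum also bounds a disk there, and a standard innermost-disk isotopy pushes this disk off $P$; hence $\delta_1'$ bounds a disk in $V_1$ and $\delta_2'$ bounds one in $W_1$. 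The assumption that $\delta_i$ is not isotopic to $\partial E_0$ in $\Sigma$ prevents $\delta_i'$ from being isotopic to $P \cap \Sigma$ in $\Sigma_1 \setminus (B_1 \cap \Sigma_1)$, so $\delta_i'$ remains essential in $\Sigma_1$ after capping off $B_1 \cap \Sigma_1$.

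The hard part will be arranging $\delta_1'$ and $\delta_2'$ to be mutually disjoint in the case $|\gamma \cap \delta_1| = |\gamma \cap \delta_2| = 1$, since there the two constructions naively both involve $\partial E_0$ and overlap along it. My plan is to use two parallel push-offs $\partial E_0^{+}$ and $\partial E_0^{-}$ of $\partial E_0$ on the two sides of $\Nbd(\partial E_0; \Sigma)$, each of which still bounds disks in $V$ and in $W$. The three marked points $\gamma \cap \partial E_0$, $\gamma \cap \delta_1$, $\gamma \cap \delta_2$ cut $\gamma$ into three subarcs; I will choose the subarc from $\delta_1$ to $\partial E_0$ that avoids $\delta_2$ to build $\delta_1'$ using $\partial E_0^{+}$, and the subarc from $\delta_2$ to $\partial E_0$ that avoids $\delta_1$ to build $\delta_2'$ using $\partial E_0^{-}$. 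Since $\partial E_0^{+}$ and $\partial E_0^{-}$ are disjoint and $\delta_1$ is disjoint from $\delta_2$ by hypothesis, with sufficiently thin bands the resulting curves $\delta_1'$ and $\delta_2'$ can be realized disjointly in $\Sigma$. The pair then witnesses that the Hempel distance of $(V_1, W_1; \Sigma_1)$ is at most $1$.
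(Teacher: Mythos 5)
Your proposal is correct and follows essentially the same strategy as the paper's proof: choose $\gamma$ meeting $\partial E_0$ once and each $\delta_i$ at most once, normalize $P = P(\gamma)$ via Lemma~\ref{lem:transitivity of Goeritz group on the set of Haken spheres}, and then produce curves in $\Sigma_1$ by combining each $\delta_i$ with $\partial E_0$ along an arc of $\gamma$. Where the paper takes boundary components of regular neighborhoods of the whole graphs $\partial E_0 \cup \delta_i \cup \gamma$ (and, in the case where both $\delta_i$ meet $\gamma$, of $\partial E_0 \cup \delta_1 \cup \delta_2 \cup \gamma$, discarding the ``third'' boundary curve), you instead band-sum each $\delta_i$ with a parallel push-off $\partial E_0^{\pm}$ along a suitable subarc of $\gamma$; your two-push-off trick handles the hardest case in a unified and slightly cleaner way, but this is a cosmetic reorganization of the same surgery idea. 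Two small points worth tightening: the reduction to $|\gamma \cap \delta_i| \leqslant 1$ is really a surgery argument (replacing an arc of $\gamma$ by an arc of $\delta_i$), not a bigon removal, since in minimal position there are no bigons; and your essentialness justification only rules out $\delta_i'$ being parallel to $P \cap \Sigma$, whereas one must also rule out $\delta_i'$ bounding a disk in $\Sigma_1 \setminus (B_1 \cap \Sigma_1)$ --- this also follows from $\delta_i$ not isotopic to $\partial E_0$ (the disk together with the pair of pants would force $\delta_i$ parallel to $\partial E_0$), but the paper glosses over this detail as well.
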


\begin{proof}
The argument will be very similar to the proof of Lemma \ref{lem:non-separating disk disjoint from a reducing disk}.
We note that $\delta_1$ is possibly isotopic to $\delta_2$.
Suppose that $\delta_1$ and $\delta_2$ bound disks in $V$ and $W$ respectively.
We want to find two disjoint, essential simple closed curves in $\Sigma_1$ such that one bound a disk in $V_1$
and the other in $W_1$.
Among the simple closed curves in $\Sigma$ that intersect $\partial E_0$ transversely in a single point, choose one, say $\gamma$, so that $\gamma$ intersects
$\delta_1 \cup \delta_2$ minimally.
We may assume that the Haken sphere $P$ equals $P(\gamma)$ as in the proof of Lemma \ref{lem:non-separating disk disjoint from a reducing disk}.
For each $i \in \{1, 2\}$, $\delta_i$ is disjoint from $\gamma$ or intersects $\gamma$ in a single point, and hence we have four cases.

If each of $\delta_1$ and $\delta_2$ is disjoint from $\gamma$, then by isotopy we may assume that $\delta_1$ and $\delta_2$ lie inside $\Sigma_1$
as disjoint, essential simple closed curves, and these bound disks inside $V_1$ and $W_1$ respectively.
If one of them, say $\delta_1$, intersects $\gamma$ in a single point and the other one $\delta_2$ is disjoint from $\gamma$, then consider the boundary of $\Nbd(\partial E_0 \cup \delta_1 \cup \gamma; \Sigma)$, which consists of two simple closed curves.
By isotopy we may assume that both of the two simple closed curves lie inside $\Sigma_1$ as essential simple closed curves, and bound disks in $V_1$,
while $\delta_2$ is an essential simple closed curve in $\Sigma_1$ disjoint from these curves and bounding a disk in $W_1$.
Finally, if each of $\delta_1$ and $\delta_2$ intersects $\gamma$ in a single point, then consider the boundary of $\Nbd(\partial E_0 \cup \delta_1 \cup \delta_2 \cup \gamma; \Sigma)$, which consists of three simple closed curves.
By isotopy again, we may assume that all the three curves lie in $\Sigma_1$.
Among the three curves, one is a component of the boundary of $\Nbd(\partial E_0 \cup \delta_1 \cup \gamma; \Sigma)$ which bounds a disk in $V_1$,
and another one is a component of the boundary of $\Nbd(\partial E_0 \cup \delta_2 \cup \gamma; \Sigma)$ which bounds a disk in $W_1$.
(The third one may bound a disk neither in $V_1$ nor in $W_1$.)
Again, these two simple closed curves are essential in $\Sigma_1$.
Therefore, in any of four cases, the Hempel distance of the splitting $(V_1, W_1; \Sigma)$ is at most $1$.
\end{proof}

Let $D$ and $E$ be essential disks in the handlebody $V$
which intersect each other transversely and minimally.
A subdisk $\Delta$ of $D$ cut off by $D \cap E$ is said to be {\it outermost}
if $\Delta \cap E$ is a single arc.
For an outermost subdisk $\Delta$ of $D$ cut off by $D \cap E$, the arc $\Delta \cap E$ cuts $E$ into two disks, say $E'$ and $E''$.
We call the two disks $E_1 = E' \cup \Delta$ and $E_2 = E'' \cup \Delta$ the
{\it disks obtained from $E$ by surgery along $\Delta$.}
Both of $E_1$ and $E_2$ can be isotoped to be disjoint from $E$.
By an elementary argument of the reduced homology group $H_2 (V , \partial V; \Integer)$, we can check easily that
at least one of $E_1$ and $E_2$ is non-separating if $E$ is non-separating.

For any simple closed curves $\gamma$ and $\delta$ in the surface $\Sigma$ which intersect each other transversely and minimally
in at least two points, we can define similarly the two simple closed curves $\gamma_1$ and $\gamma_2$ obtained from $\gamma$ by surgery along an outermost subarc of $\delta$ cut off by $\gamma \cap \delta$.
Here an outermost subarc, say $\delta'$, is a component of $\delta$ cut off by $\gamma \cap \delta$ which meets $\gamma$ only in its endpoints and cuts $\gamma$ into two arcs, say $\gamma'$ and $\gamma''$.
Then $\gamma_1 = \gamma' \cup \delta'$ and $\gamma_2 = \gamma'' \cup \delta'$.
If the subarc $\delta'$ meets $\gamma$ in the same side, 
then both of $\gamma_1$ and $\gamma_2$ can be isotoped to be disjoint from $\gamma$.
We also see that, if $\gamma$ is non-separating, then at least one of $\gamma_1$ and $\gamma_2$ are non-separating, by an elementary argument of $H_2 (\Sigma; \Integer)$.

\begin{proposition}
\label{prop:uniqueness of the reducing disk}
Suppose that the Hempel distance of the splitting $(V_1, W_1; \Sigma)$ is at least $2$.
Then we have the following:
\begin{enumerate}
\item
there exists a unique reducing disk in $V$, and
\item
$\mu (V, W; \Sigma) > 0$.
\end{enumerate}
\end{proposition}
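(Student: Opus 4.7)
Plan: The plan is to derive a contradiction in each of (1) and (2) by constructing curves in $\Sigma$ that satisfy the hypotheses of Lemma \ref{lem:non-separating disk disjoint from a reducing disk} or Lemma \ref{lem:disk and a simple closed curve}, thereby forcing the Hempel distance of $(V_1, W_1; \Sigma_1)$ to drop below $2$.

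For part (1), I would suppose $E \subset V$ is a reducing disk with $E \not\simeq E_0$ and fix a $W$-disk $E^W$ with $\partial E^W = \partial E$, isotoped so that both $|E \cap E_0|$ and $|E^W \cap E'_0|$ are minimal (innermost-disk surgery removing any closed-curve intersections). If $E \cap E_0 = \emptyset$, then $\partial E$ and $\partial E_0$ are disjoint, essential, non-isotopic curves in $\Sigma$ (non-isotopic because boundary-isotopic non-separating disks in a handlebody are themselves isotopic), and $\partial E$ bounds disks in both handlebodies; Lemma \ref{lem:non-separating disk disjoint from a reducing disk} applied to $\delta = \partial E$ immediately gives Hempel distance $0$, a contradiction. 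When $E \cap E_0 \neq \emptyset$, I would take an outermost subdisk $\Delta \subset E_0$ cut off by $E \cap E_0$, write $\partial \Delta = \alpha \cup \beta$ with $\alpha \subset E$ and $\beta \subset \partial E_0$, and surger $E$ along $\Delta$ to obtain two disks $F_1, F_2 \subset V$, at least one of which, say $F_1$, is non-separating, disjoint from $E_0$ after a small isotopy, and has $\partial F_1 = a_1 \cup \beta$ disjoint from $\partial E_0$. The absence of bigons between $\partial E$ and $\partial E_0$ (from minimality) forces $\partial F_1 \not\simeq \partial E_0$, for otherwise $a_1$ would be isotopic rel endpoints to the complementary subarc of $\partial E_0$, producing a bigon. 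I would then perform the analogous outermost surgery on the $W$-side with some outermost subdisk $\Delta^W \subset E'_0$ to obtain a non-separating $F_1^W \subset W$ with $\partial F_1^W$ disjoint from and non-isotopic to $\partial E_0$. Choosing $\Delta$ and $\Delta^W$ compatibly so that they share the same outermost $\partial E_0$-arc $\beta$ would make $\partial F_1$ and $\partial F_1^W$ either coincide as a single curve bounding disks in both handlebodies (apply Lemma \ref{lem:non-separating disk disjoint from a reducing disk}) or be two disjoint essential curves in $\Sigma$, in which case Lemma \ref{lem:disk and a simple closed curve} applied to $\delta_1 = \partial F_1, \delta_2 = \partial F_1^W$ gives Hempel distance $\leq 1$, a contradiction.

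For part (2), I would suppose $P, Q$ are non-equivalent Haken spheres with $c_P \cap c_Q = \emptyset$, where $c_P = P \cap \Sigma$ and $c_Q = Q \cap \Sigma$. When $P$ is non-separating in $M$, the disks $P \cap V$ and $P \cap W$ are both non-separating, so $P \cap V$ is a reducing disk of $V$; part (1) yields $c_P \simeq \partial E_0$. Then $c_Q$ is essential in $\Sigma$, disjoint from and non-isotopic to $\partial E_0$, and bounds disks in both handlebodies, so Lemma \ref{lem:non-separating disk disjoint from a reducing disk} provides the contradiction. When both $P, Q$ are separating in $M$, each separates off an $S^2 \times S^1$-summand of $M$; the summand of $P$ meets $V$ in a solid torus $V_P \subset V$ whose meridian disk $E_P$ is a reducing disk of $(V, W; \Sigma)$ (non-separating in $V$, with $\partial E_P$ bounding the meridian of the $W$-part of the summand). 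Part (1) gives $E_P \simeq E_0$, and since $\partial E_P$ may be chosen to lie in the $P$-side of $c_P$ in $\Sigma$, $\partial E_0$ is isotopic to a curve disjoint from $c_P$. As $c_P$ is separating while $\partial E_0$ is non-separating, $c_P \not\simeq \partial E_0$, and $c_P$ bounds disks in both handlebodies as the two halves of $P$; Lemma \ref{lem:non-separating disk disjoint from a reducing disk} applied to $\delta = c_P$ again yields the contradiction.

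The main obstacle will be the compatibility claim used in part (1): arranging that the outermost subdisks $\Delta \subset E_0$ and $\Delta^W \subset E'_0$ share an outermost $\partial E_0$-arc $\beta$. Because the matchings on $\partial E \cap \partial E_0$ induced by the interior arcs of $E \cap E_0$ inside $E_0$ and of $E^W \cap E'_0$ inside $E'_0$ are a priori independent, no single $\beta$ need be outermost on both sides simultaneously, and I expect that an iterative argument reducing $|\partial E \cap \partial E_0|$ via successive surgeries will be required to achieve the alignment and close the proof.
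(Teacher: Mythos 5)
The proposal does not close the proof, and you have in fact put your finger on exactly where it breaks. Your geometric strategy for part (1) --- surger $E$ along an outermost subdisk $\Delta \subset E_0$ to get a non-separating $V$-disk $F_1$ disjoint from $E_0$, surger $E^W$ along an outermost subdisk $\Delta^W \subset E_0'$ to get a non-separating $W$-disk $F_1^W$ disjoint from $\partial E_0$, then apply Lemma~\ref{lem:non-separating disk disjoint from a reducing disk} or \ref{lem:disk and a simple closed curve} --- requires $\partial F_1$ and $\partial F_1^W$ to be disjoint. As you say, $\partial F_1$ and $\partial F_1^W$ are each built from one subarc of $\partial E$ together with a subarc of $\partial E_0$, and there is no reason the outermost subarc of $\partial E_0$ seen from inside $E_0$ agrees with the one seen from inside $E_0'$: the chord diagrams of $E \cap E_0$ in $E_0$ and of $E^W \cap E_0'$ in $E_0'$ are independent. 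Trying to iterate outermost surgeries to repair this alignment does not obviously terminate in anything useful, because $F_1$ is merely a $V$-disk, not a reducing disk, so you cannot rerun the argument on it. This is a genuine gap, not a technicality.

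The paper avoids the alignment problem entirely by replacing the two-sided surgery argument with an algebraic one. Fixing a complete meridian system $\{E_0', D_1', \ldots, D_{g-1}'\}$ of $W$ with $\partial E_0' = \partial E_0$, one reads the word $w(\partial E)$ in $\pi_1(W) = \langle x, y_1, \ldots, y_{g-1}\rangle$ by cutting $\partial E$ along $\partial E_0$ into arcs $\delta_1, \ldots, \delta_{2n}$, so that $w(\partial E) = x^{\epsilon_1} w(\delta_1) \cdots x^{\epsilon_{2n}} w(\delta_{2n})$. Surgery of $E_0$ along outermost subdisks of $E$ produces non-separating curves $\partial E_1$ and $\gamma_0$ disjoint from $\partial E_0$, and Lemmas~\ref{lem:non-separating disk disjoint from a reducing disk} and \ref{lem:disk and a simple closed curve} (via the Loop Theorem) guarantee that each ``same-side'' subarc $\delta_{1,i}$ contributes a non-trivial subword $w(\delta_{1,i})$; ``opposite-side'' subarcs contribute $x^{\pm 1}(\cdot)x^{\pm 1}$ with equal signs, which cannot cancel. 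So the whole word is non-trivial and $\partial E$ cannot bound in $W$. Only one side's surgeries are ever performed, which is precisely what makes the argument close.

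Two further remarks. In part (2), your first case (``$P$ non-separating in $M$'') is vacuous --- the paper's definition of a Haken sphere in Section~\ref{sec:Sphere complexes} requires $P$ to be separating. Also, your claim in the second case that each of $P$, $Q$ ``separates off an $S^2 \times S^1$-summand'' is not automatic: a Haken sphere for $(V, W; \Sigma)$ could instead realize a stabilization (cutting off a standardly embedded $S^3$ summand). The paper's proof of (2) is instead the short observation that if $\mu = 0$, two disjoint non-equivalent Haken spheres would produce a Haken sphere for $(V_1, W_1; \Sigma_1)$ itself, forcing its Hempel distance to be $0$.
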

\begin{proof}
(2) is easy to verify. In fact, if $\mu (V, W; \Sigma) = 0$, then one might find a Haken sphere for the splitting $(V_1, W_1; \Sigma_1)$, and hence the Hempel distance of $(V_1, W_1; \Sigma_1)$ is $0$, a contradiction.

In the following, we prove (1).
Let $E$ be an essential non-separating disk in $V$ that is not isotopic to $E_0$.
We may assume that $E$ intersects $E_0$ transversely and minimally.
If $E$ is disjoint from $E_0$, then $E$ is not a reducing disk by
Lemma \ref{lem:non-separating disk disjoint from a reducing disk}.
Suppose that $E$ intersects $E_0$.
Then $\partial E_0$ cuts $\partial E$ into $2n$ ($n \geqslant 1$) simple arcs
$\delta_1$, $\delta_2, \ldots, \delta_{2n}$.
We divide the collection of these arcs into two subcollections as
\[
\{ \delta_1 , \delta_2 , \ldots, \delta_{2n} \} =
\{ \delta_{1, 1} , \delta_{1, 2} , \ldots, \delta_{1, n_1} \} \sqcup
\{ \delta_{2, 1} , \delta_{2, 2} , \ldots, \delta_{2, n_2} \} ,
\]
where each of the arcs $\delta_{1, i}$ meets $\partial E_0$ in the same side while each of $\delta_{2, j}$ in the opposite sides.
We may assume without loss of generality that there exists
an outermost subdisk $\Delta$ of $E$ cut off by $E \cap E_0$ such that
$\delta_{1, 1} \subset \partial \Delta$.
Let $\{E_0', D_1', D_2', \ldots, D_{g-1}'\}$ be a complete system of meridian disks of $W$,
where $\partial E_0' = \partial E_0$.
Fix orientations of the boundary circles $\partial E_0'$ and $\partial D_1', \partial D_2', \ldots, \partial D_{g-1}'$, and assign symbols $x$ and $y_1, y_2, 
\ldots , y_{g-1}$ on the circles, respectively.
Then any oriented simple closed curve $\delta$ in $\Sigma$ intersecting the boundary circles transversely determines a
word $w(\delta )$ on $\{x, y_1 , y_2 , \ldots , y_{g-1}\}$
that can be read off from the intersections of $\delta$ with the circles.
This word determines an element of the free group $\pi_1 (W) = \langle x, y_1 , y_2 , \ldots , y_{g-1} \rangle $
represented by $\delta$.
Let $E_1$ and $E_2$ be the disks obtained from $E_0$ by surgery along $\Delta$.
Since $E_0$ is non-separating, at least one of the two, say $E_1$, is non-separating.
By a small isotopy, we assume that $E_1$ is disjoint from $E_0$.
\begin{claim}
\label{claim:subword read off by delta11}
The word $w (\delta_{1,1})$ on $\{ y_1 , y_2 , \ldots , y_{g-1} \}$ read off by the interior of the arc $\delta_{1, 1}$ represents a non-trivial element of $\pi_1 (W) = \langle x , y_1 , y_2 , \ldots , y_{g-1} \rangle$.
\end{claim}
\noindent {\it Proof of Claim $1$}.
The disk $E_1$ is non-separating, disjoint from $E_0$ and not isotopic to $E_0$, and hence, by Lemma \ref{lem:non-separating disk disjoint from a reducing disk}, it is not a reducing disk.
That is, $\partial E_1$ does not bound a disk in $W$.
Thus by the Loop Theorem $w (\partial E_1) = w (\delta_{1, 1})$ determines a  non-trivial element of $\pi_1 (W)$.

\begin{claim}
\label{claim:subword read off by delta1i}
The word $w (\delta_{1,i})$ on $\{ y_1 , y_2 , \ldots , y_{g-1} \}$ read off by the interior of the arc $\delta_{1, i}$ $(2 \leqslant i \leqslant n_1)$ represents a non-trivial element of $\pi_1 (W) = \langle x , y_1 , y_2 , \ldots , y_{g-1} \rangle$.
\end{claim}
\noindent {\it Proof of Claim $2$}.
The arc $\delta_{1, i}$ is an outermost subarc of $\partial E$ cut off by $\partial E \cap \partial E_0$.
One of the two simple closed curves obtained from $\partial E_0$ by surgery along $\delta_{1, i}$ is a non-separating curve, which we denote by $\gamma_0$.
By a small isotopy, we may assume that $\gamma_0$ is disjoint from $\partial E_0$.
Further, we observe that
$\gamma_0$ intersects $\partial E_1$ transversely at most once.
If $\gamma_0$ is disjoint from $\partial E_1$, then $\gamma_0$ cannot bound a disk in $W$ by Lemma \ref{lem:disk and a simple closed curve} since we assumed that the Hempel distance of the splitting $(V_1, W_1; \Sigma)$ is at least $2$.
If $\gamma_0$ intersects $\partial E_1$ in a single point, then the boundary of $\Nbd(\gamma_0 \cup \partial E_1; \Sigma)$ is a simple closed curve, which is disjoint from $\gamma_0$ and bounds a disk in $V$.
Thus $\gamma_0$ cannot bound a disk in $W$ again by Lemma \ref{lem:disk and a simple closed curve}.
Hence by the Loop Theorem the word
$w (\gamma_0) = w (\delta_{1, i})$ determines a non-trivial element of $\pi_1 (W)$.

\medskip

Now we can write the word $w (\partial E )$ on $\{x , y_1 , y_2 , \ldots , y_{g-1}\}$
as
$$x^{\epsilon_1} w(\delta_1) x^{\epsilon_2} w(\delta_2) x^{\epsilon_3}w(\delta_3) \cdots x^{\epsilon_{2n}}w (\delta_{2n}),$$
where $\epsilon_k \in \{-1, 1\}$ for $k \in \{1, 2, \ldots , 2n\}$.
If $\delta_k = \delta_{1, i}$ for some $i \in \{1, 2,  \ldots , n_1\}$, then $\{\epsilon_k, \epsilon_{k+1}\} = \{-1, 1\}$ but $w(\delta_k)$ is non-trivial.
If $\delta_k = \delta_{2, j}$ for some $j \in \{1, 2,  \ldots , n_2\}$, then $w(\delta_k)$ is possibly trivial but $\epsilon_k = \epsilon_{k+1}$. (Here $\epsilon_{2n+1} = \epsilon_1$.)
This implies that $w (\partial E )$ determines a non-trivial element of $\pi_1 (W)$, and so $\partial E$ cannot bound a disk in $W$.
Thus $E$ cannot be a reducing disk.
\end{proof}

By Lemma \ref{lem:contractibility of sphere compex}, and Propositions
\ref{prop:H and HE0} and
\ref{prop:uniqueness of the reducing disk},
we have the main result of the section:
\begin{corollary}
\label{cor:arc complex and sphere complex for the connected sums}
Let $(V_1, W_1; \Sigma_1)$ be a genus-$(g-1)$ Heegaard splitting of Hempel distance at least $2$ for a closed orientable $3$-manifold $M_1$, where $g \geq 2$, and let $(V_2, W_2; \Sigma_2)$ be the genus-$1$ Heegaard splitting for $S^2 \times S^1$.
If $(V, W ; \Sigma)$ is the splitting for $M_1 \# (S^2 \times S^1)$
obtained from $(V_1, W_1; \Sigma_1)$ and $(V_2, W_2; \Sigma_2)$, then
the sphere complex $\mathcal{H}$ for the splitting $(V, W; \Sigma)$
is isomorphic to the complex $\mathcal{A}^*_{g-1, 2}$, and hence it is a $(4g - 5)$-dimensional contractible complex. 
\end{corollary}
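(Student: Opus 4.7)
The plan is to assemble the three preceding results in sequence. The setup of the corollary matches exactly the running hypothesis imposed before Lemma \ref{lem:non-separating disk disjoint from a reducing disk}: a genus-$g$ Heegaard splitting $(V, W; \Sigma)$ for $M_1 \# (S^2 \times S^1)$ built from $(V_1, W_1; \Sigma_1)$ and the standard genus-$1$ splitting of $S^2 \times S^1$, together with a distinguished reducing disk $E_0 \subset V$ coming from a meridian of the $V_2$ summand.

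First, I would apply Proposition \ref{prop:uniqueness of the reducing disk}. Its hypothesis is exactly that the Hempel distance of $(V_1, W_1; \Sigma_1)$ be at least $2$, which is assumed. The proposition delivers two conclusions simultaneously: $E_0$ is, up to isotopy, the unique reducing disk in $V$, and $\mu(V, W; \Sigma) > 0$. These are precisely the two hypotheses of Proposition \ref{prop:H and HE0}, which I would invoke next to obtain the equality $\mathcal{H} = \mathcal{H}_{E_0}$ of the full sphere complex with the subcomplex spanned by those Haken spheres that cut off from $V$ a solid torus with meridian $E_0$.

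To finish, Lemma \ref{lem:contractibility of sphere compex} requires only the existence of a reducing disk in $V$ and identifies $\mathcal{H}_{E_0}$ with the complex $\mathcal{A}^*_{g-1, 2}$. Corollary \ref{cor:subcomplex of arc complex} then shows $\mathcal{A}^*_{g-1, 2}$ is contractible, and Lemma \ref{lem:dimensions of arc complexes} computes its dimension as $4(g-1) - 1 = 4g - 5$. Composing the two identifications $\mathcal{H} = \mathcal{H}_{E_0} \cong \mathcal{A}^*_{g-1, 2}$ yields the stated conclusion.

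There is essentially no obstacle at this final assembly step; all the nontrivial work lies upstream, particularly in Proposition \ref{prop:uniqueness of the reducing disk}, where the Hempel-distance hypothesis is converted into a Loop-Theorem-based word-length argument in $\pi_1(W)$. At the corollary level, the only thing that needs verification is that the hypotheses of each intermediate result are genuinely met by our data, which they are in a completely mechanical fashion.
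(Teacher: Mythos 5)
Your proposal is correct and follows exactly the same chain of reasoning as the paper: Proposition \ref{prop:uniqueness of the reducing disk} (using the Hempel distance hypothesis) supplies the two hypotheses of Proposition \ref{prop:H and HE0}, which gives $\mathcal{H} = \mathcal{H}_{E_0}$, and Lemma \ref{lem:contractibility of sphere compex} (together with Corollary \ref{cor:subcomplex of arc complex} and Lemma \ref{lem:dimensions of arc complexes}) identifies $\mathcal{H}_{E_0}$ with the contractible $(4g-5)$-dimensional complex $\mathcal{A}^*_{g-1,2}$. The paper states this corollary with precisely that one-line citation, so your assembly matches it.
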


Recalling that the Hempel distance of the genus-$1$ Heegaard splitting of $S^3$ is $\infty$, we also have the following.
\begin{corollary}
\label{cor:contractibility of sphere compex}
Let $(V, W ; \Sigma)$ be the genus-$2$ Heegaard splitting for $S^2 \times S^1$.
Then the sphere complex $\mathcal{H}$ for the splitting $(V, W ; \Sigma)$ is a $3$-dimensional contractible complex. 
\end{corollary}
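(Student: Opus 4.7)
The plan is to obtain this corollary as a direct specialization of Corollary \ref{cor:arc complex and sphere complex for the connected sums} with $g = 2$ and $M_1 = S^3$, once the genus-$2$ Heegaard splitting of $S^2 \times S^1$ has been identified with the connected-sum splitting produced from the genus-$1$ splittings of $S^3$ and of $S^2 \times S^1$.

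The first step is to recall that $S^2 \times S^1$ is homeomorphic to $S^3 \# (S^2 \times S^1)$, and that the construction described in Section \ref{sec:Sphere complexes} therefore yields a genus-$2$ Heegaard splitting of $S^2 \times S^1$ by gluing the (unique) genus-$1$ splitting of $S^3$, playing the role of $(V_1, W_1; \Sigma_1)$, to the (unique) genus-$1$ splitting of $S^2 \times S^1$, playing the role of $(V_2, W_2; \Sigma_2)$. Next, by Waldhausen's uniqueness of Heegaard splittings of $S^2 \times S^1$ together with Haken's lemma, both already invoked earlier in the paper, every genus-$2$ Heegaard splitting of $S^2 \times S^1$ is a stabilization of the genus-$1$ splitting, and this stabilization is precisely the connected-sum splitting just described. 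Thus the statement really concerns the splitting $(V, W; \Sigma)$ to which Corollary \ref{cor:arc complex and sphere complex for the connected sums} applies.

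Then I would check the single hypothesis requiring attention, namely that $(V_1, W_1; \Sigma_1)$ has Hempel distance at least $2$. This is immediate from the remark in Section \ref{sec:Sphere complexes} that a genus-$1$ Heegaard splitting has Hempel distance $\infty$ whenever the ambient manifold is not $S^2 \times S^1$; in particular the genus-$1$ splitting of $S^3$ has infinite Hempel distance. Invoking the corollary then gives $\mathcal{H} \cong \mathcal{A}^*_{1, 2}$, and contractibility together with the dimension $4g - 5 = 3$ follow at once.

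No genuine obstacle arises beyond the identification of the genus-$2$ splitting of $S^2 \times S^1$ with a connected-sum splitting, which is a direct consequence of the Waldhausen and Haken uniqueness results already cited in the paper; from that point on, the statement is a pure instantiation of the preceding corollary.
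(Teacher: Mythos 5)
Your proposal is correct and follows essentially the same route as the paper: specialize Corollary~\ref{cor:arc complex and sphere complex for the connected sums} with $M_1 = S^3$, $g = 2$, using that the genus-$1$ splitting of $S^3$ has Hempel distance $\infty \geqslant 2$. You make explicit the identification, via Waldhausen uniqueness, of the genus-$2$ splitting of $S^2\times S^1$ with the connected-sum splitting, which the paper leaves implicit but clearly intends.
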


\section{Goeritz groups}
\label{sec:Goeritz groups}

Let $M$ be an orientable manifold.
Let $X_1$, $X_2, \ldots,$ $X_n$ and $Y$ be subspaces of $M$.
We denote by
\[\Homeo_+ (M, X_1, X_2, \ldots, X_n ~{\mathrm{rel}}~ Y)\]
the group of orientation-preserving
homeomorphisms of $M$ that preserve each of the subspaces $X_1$, $X_2, \ldots,$ $X_n$ setwise, 
and $Y$ pointwise.
We equip this group with the compact-open topology.
Let $\Homeo_0 (M, X_1, X_2, \ldots, X_n ~{\mathrm{rel}}~ Y)$ be the connected component of
$\Homeo_+ (M, X_1, X_2, \ldots, X_n ~{\mathrm{rel}}~ Y)$
containing the identity.
This component is a normal subgroup, and we denote by $\mathrm{MCG}_+ (M, X_1,X_2, \ldots, X_n ~{\mathrm{rel}}~ Y)$ the quotient group
\begin{eqnarray*}
\Homeo_+ (M , X_1, X_2, \ldots, X_n ~{\mathrm{rel}}~ Y) / \Homeo_0 (M, X_1, X_2, \ldots, X_n ~{\mathrm{rel}}~ Y) .
\end{eqnarray*}
Let $(V, W; \Sigma)$ be a Heegaard splitting of a closed orientable 3-manifold $M$.
We recall that the Goeritz group of the splitting $(V, W; \Sigma)$ is the group of isotopy classes of the orientation-preserving homeomorphisms of $M$ that preserve $V$ and $W$ setwise.
We denote by $\mathcal{G} (V, W; \Sigma)$ the Goeritz group, which is identified with the quotient group $\MCG_+ (M, V)$.
We note that there are natural injective homomorphisms
$\MCG_+(V) \to \MCG_+(\Sigma)$ and $\MCG_+(W) \to \MCG_+(\Sigma)$,
which can be obtained by restricting homeomorphisms of $V$ and $W$ to
$\Sigma$, respectively.
Once we regard the groups $\MCG_+(V)$ and $\MCG_+(W)$ as subgroups of $\MCG_+(\Sigma)$
with respect to the inclusions,
$\mathcal{G} (V, W; \Sigma)$ is identified with $\MCG_+(V) \cap \MCG_+(W)$.
We also note that the group $\mathcal{G} (V, W; \Sigma)$ acts on the sphere complex
$\mathcal{H}$ of $(V, W; \Sigma)$ simplicially if the splitting $(V, W; \Sigma)$ admits Haken spheres.

In \cite{Nam07}, Namazi showed that if the Hempel distance of the splitting $(V, W; \Sigma)$ is sufficiently high,
then $\mathcal{G} (V, W; \Sigma)$ is a finite group.
Later, Johnson \cite{Joh10} improved this result as follows.
\begin{theorem}[Johnson \cite{Joh10}]
\label{thm:Hempel distance and the Goeritz groups}
If the Hempel distance of the splitting $(V, W; \Sigma)$ is at least $4$,
then the group $\mathcal{G} (V, W; \Sigma)$ is finite.
\end{theorem}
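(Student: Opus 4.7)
The plan is to realize the Goeritz group as a subgroup of the mapping class group via $\mathcal{G}(V, W; \Sigma) = \MCG_+(V) \cap \MCG_+(W) \subset \MCG_+(\Sigma)$, and to exploit its action on the curve complex $\mathcal{C}_g$ of the Heegaard surface. Every Goeritz element preserves each handlebody, and hence setwise preserves both disk subcomplexes $\mathcal{D}_V$ and $\mathcal{D}_W$; in particular, it permutes the set $R$ of pairs $(\alpha, \beta) \in \mathcal{D}_V \times \mathcal{D}_W$ realizing the Hempel distance.

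The first step is to show that $R$ is finite. Here the hypothesis that the Hempel distance is at least $4$ enters through two deep geometric facts about the curve complex: Gromov hyperbolicity (Masur--Minsky) and the uniform quasi-convexity of disk sets (Masur--Schleimer). Once the Hempel distance exceeds suitable additive constants coming from these two inputs, any two realizing pairs must fellow-travel a common geodesic joining $\mathcal{D}_V$ to $\mathcal{D}_W$, and a coarse-projection argument forces $R$ to be finite. This yields a homomorphism $\mathcal{G}(V, W; \Sigma) \to \mathrm{Sym}(R)$ whose image is finite, so it remains to show that the kernel $\mathcal{G}_0$, which fixes some chosen realizing pair $(\alpha, \beta)$, is finite.

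Next I would extract from $(\alpha, \beta)$ a canonical filling system of isotopy classes of simple closed curves invariant under $\mathcal{G}_0$. A natural candidate is the (finite) union $F$ of all curves appearing on some minimum-length geodesic from $\alpha$ to $\beta$ in $\mathcal{C}_g$; hyperbolicity makes this set finite and manifestly $\mathcal{G}_0$-invariant, while the assumption $d(\mathcal{D}_V, \mathcal{D}_W) \geq 4$ rules out any essential simple closed curve being disjoint from every element of $F$, so $F$ fills $\Sigma$. A standard Alexander-method / Nielsen-realization argument then implies that the stabilizer of a filling system in $\MCG_+(\Sigma)$ is finite, finishing the proof.

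The main obstacle I expect is quantitative sharpness: the hyperbolicity and quasi-convexity constants are not effectively controlled, so the naive execution above would only give finiteness of $\mathcal{G}(V, W; \Sigma)$ under a bound of the form Hempel distance $\geq C(g)$ with $C(g)$ potentially very large, rather than the clean threshold $4$. Bringing the bound down to $4$ will presumably require handlebody-specific refinements — for example, a canonical coarse projection onto $\mathcal{D}_V$ and $\mathcal{D}_W$ whose constants do not depend on genus — and this is where the substance of Johnson's argument is likely concentrated.
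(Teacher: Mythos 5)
This statement is quoted by the paper from Johnson's article \cite{Joh10}; the paper itself gives no proof, so there is nothing internal to compare your argument against. What can be said is whether your sketch would in fact establish the theorem, and here you have already, and correctly, identified the problem yourself: the coarse-geometry route you outline (hyperbolicity of $\mathcal{C}_g$ plus quasi-convexity of the disk sets, following Masur--Minsky and Masur--Schleimer) is Namazi's argument \cite{Nam07}, and it only yields finiteness of the Goeritz group once the Hempel distance exceeds a constant depending on the hyperbolicity and quasi-convexity parameters -- a constant that is not effectively controlled and is certainly not known to be $4$. As written, your proposal proves Namazi's theorem, not Johnson's.

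Beyond the threshold issue there is a further soft spot in the first step: it is not clear that the set $R$ of pairs $(\alpha,\beta)\in\mathcal{D}_V\times\mathcal{D}_W$ realizing the Hempel distance is finite, and the coarse projection machinery does not obviously give that. Johnson's actual argument does not pass through such a set; it is more combinatorial, building a canonical finite collection of curves from surgery sequences between the two disk sets and showing directly that, once the distance is at least $4$, this collection fills the surface and is preserved by every Goeritz element. That hands-on analysis of disk surgeries, rather than coarse hyperbolic geometry, is exactly what produces the explicit bound $4$, and it is the ingredient missing from your sketch. In short: your outline is a reasonable summary of the weaker, qualitative result, but it does not prove the statement as stated, and you were right to flag where the gap is.
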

For Heegaard splittings of low Hempel distance, the situation is much more complicated
as mentioned in Introduction.

In this section, we are interested in the Goeritz groups of the Heegaard splittings described in Section \ref{sec:Sphere complexes}. 
Let $(V, W; \Sigma)$ be a genus-$g$ Heegaard splitting of a closed orientable $3$-manifold $M$, where $g \geqslant 2$.
Suppose that there exists a unique reducing disk $E_0$ in $V$.
Fix a Haken sphere $P$ for the splitting $(V, W; \Sigma)$ which represents a vertex of the complex $\mathcal H_{E_0}$. 
That is, $P$ is the Haken sphere determined by a simple closed curve in $\Sigma$ intersecting $\partial E_0$ in a single point as in Section \ref{sec:Sphere complexes}.
Then the disk $P \cap V$ cut off from $V$ a
solid torus whose meridian disk is $E_0$.

The handlebody $V$ cut off by $P \cap V$
consists of two handlebodies $V_1'$ and $V_2'$, and similarly $W$ cut off by $P \cap W$ consists of $W_1'$ and $W_2'$.
Gluing $3$-balls $B_1$ and $B_2$ on $V_1' \cup W_1'$ and $V_2' \cup W_2'$ along $P$, we obtain two Heegaard splittings $(V_1, W_1; \Sigma_1)$ and $(V_2, W_2; \Sigma_2)$ respectively.
We may assume that $(V_2, W_2; \Sigma_2)$ is the genus-$1$ splitting of $S^2 \times S^1$, while $(V_1, W_1; \Sigma_1)$ is the genus-$(g-1)$ splitting of a $3$-manifold having no $S^2 \times S^1$ summand in its prime decomposition.


Suppose that the Goeritz group $\mathcal{G}(V_1 , W_1; \Sigma_1)$ is generated by
finitely many elements $\omega_1$, $\omega_2 , \ldots, \omega_m$.
For each $i \in \{1, 2, \ldots, m \}$, the element $\omega_i$ has a representative homeomorphism $w_i \in \Homeo_+ (M_1, V_1)$
satisfying $w_i|_{B_i}$ is the identity.
Thus there exists an element $\tilde{\omega}_i$ of $\mathcal{G}(V , W; \Sigma)$
represented by a homeomorphism $\tilde{w}_i \in \Homeo_+ (M, V)$ such that
$\tilde{w}_i (P) = P$,
$\tilde{w}_i |_{V_1' \cup W_1'} = w_i|_{V_1' \cup W_1'}$, and
$\tilde{w}_i |_{V_2' \cup W_2'}$ is the identity.

We also define the elements $\lambda_j$ and $\mu_j$ for each $j \in \{ 1 , 2 , \ldots , g-1 \}$, and the elements $\beta$ and $\epsilon$ of
$\mathcal{G}(V, W ; \Sigma)$ as follows.
The elements $\lambda_j$ and $\mu_j$ have representative homeomorphisms obtained by
pushing $V_2' \cup W_2'$ so that $P \cap \Sigma$ moves along the arcs depicted in
Figure \ref{fig:pushing1} respectively.

\begin{figure}[htbp]
\includegraphics[width=12cm,clip]{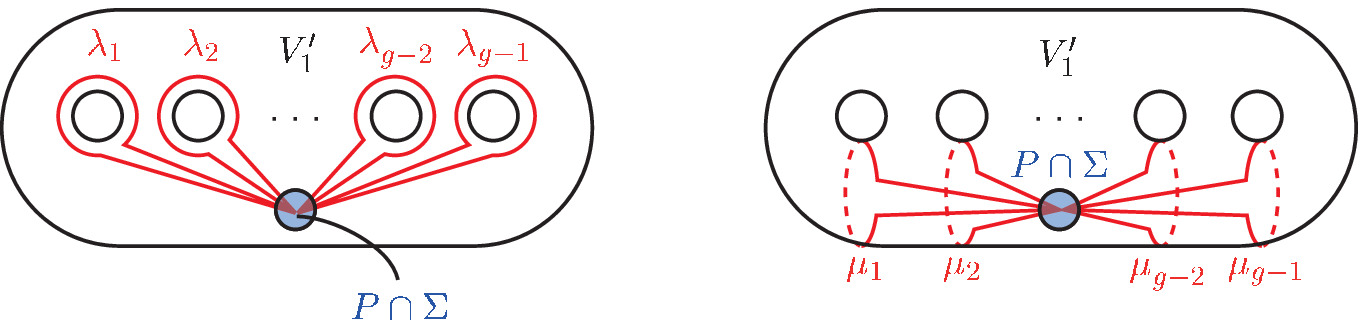}
\caption{}
\label{fig:pushing1}
\end{figure}

The element $\beta$ is defined by extending a half-Dehn twist about the disk $P \cap V$, and the element $\epsilon$ is defined by extending a Dehn twist about the unique reducing disk $E_0$ in $V$.
See Figure \ref{fig:beta}.
Note that all of $\tilde{\omega}_i$, $\lambda_j$, $\mu_j$, $\beta$ and $\epsilon$ preserve the equivalence class of the Haken sphere $P$.

\begin{figure}[htbp]
\begin{center}
\includegraphics[width=7cm,clip]{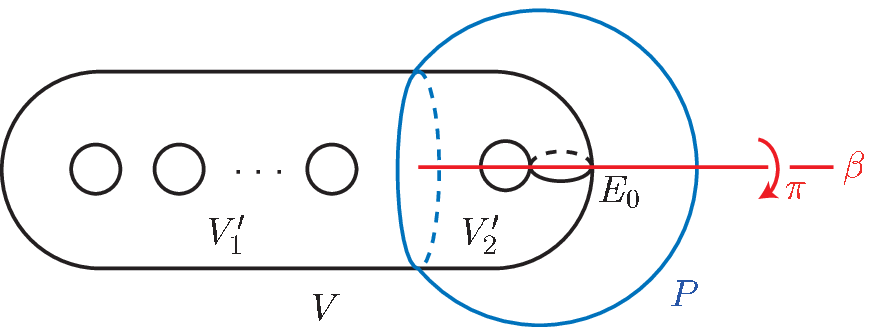}
\caption{}
\label{fig:beta}
\end{center}
\end{figure}

\begin{lemma}
\label{lem:stabilizer of a Haken sphere}
Under the setting in the above,
the subgroup of $\mathcal{G} (V, W; \Sigma)$ consisting of elements
that preserve the equivalence class of $P$ is generated by
$\tilde{\omega}_i$, $\mu_j$, $\lambda_j$,
$\beta$ and $\epsilon$, where $i \in \{ 1 , 2 , \ldots, m \}$ and $j \in \{ 1 , 2 , \ldots,  g - 1  \}$.
\end{lemma}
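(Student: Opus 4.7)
The plan is to take an arbitrary $\phi \in \mathcal{G}(V, W; \Sigma)$ that stabilizes the equivalence class of $P$, and to reduce it to the identity by progressively multiplying with the claimed generators. Since $\phi([P]) = [P]$, the curves $\phi(P) \cap \Sigma$ and $P \cap \Sigma$ are isotopic on $\Sigma$; extending this isotopy to a collar of $\Sigma$ in $M$ (thereby preserving $V$ and $W$) and then using irreducibility of the handlebodies to straighten the disks $\phi(P) \cap V$ and $\phi(P) \cap W$, I may replace $\phi$ by an isotopic representative with $\phi(P) = P$ setwise. Since $E_0$ is the unique reducing disk of $V$, $\phi(E_0)$ is isotopic to $E_0$, so after a further isotopy $\phi$ preserves the solid torus $V_2'$ setwise, and hence preserves each of $V_1' \cup W_1'$ and $V_2' \cup W_2'$.

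The core reduction now treats each side of $P$ separately. On the $M_1$-side, capping the boundary sphere $P$ of $V_1' \cup W_1'$ by $B_1$ and extending $\phi|_{V_1' \cup W_1'}$ across $B_1$ via the Alexander trick yields a well-defined element $\bar{\phi}_1 \in \mathcal{G}(V_1, W_1; \Sigma_1)$. Writing $\bar{\phi}_1$ as a word in $\omega_1, \ldots, \omega_m$ and multiplying $\phi$ by the corresponding word in the lifts $\tilde{\omega}_i$ --- which by construction are the identity on $V_2' \cup W_2'$ --- reduces us to the case where $\bar{\phi}_1$ equals $\id$ in $\mathcal{G}(V_1, W_1; \Sigma_1)$. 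A witnessing isotopy in $M_1$ drags $B_1$ along some loop before returning it to $B_1$, and when transported back to $M$ this isotopy becomes a push of $V_2' \cup W_2'$ along the same loop in $V_1' \cup W_1'$. Since $\pi_1(V_1' \cup W_1') = \pi_1(M_1)$ is generated, via van Kampen, by the $g-1$ standard loops in $V_1$ and the $g-1$ standard loops in $W_1$, and since $\lambda_j$ and $\mu_j$ are by construction the push maps corresponding precisely to these loops, the push in question is a word in the $\lambda_j$ and $\mu_j$. Multiplying $\phi$ by the inverse of this word, I may now assume that $\phi$ is the identity on $V_1' \cup W_1'$.

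At this point $\phi$ fixes $\partial B_2 = P$ pointwise and restricts to a homeomorphism of $(S^2 \times S^1) \setminus B^3$ preserving the solid torus $V_2'$. This class lies in the Goeritz-type mapping class group of the genus-$1$ splitting of $S^2 \times S^1$ relative to a boundary sphere; a direct analysis of this well-understood group shows that it is generated by the Dehn twist $\epsilon$ about $E_0$ and the half-twist $\beta$ about the disk $P \cap V$. A final multiplication by a word in $\epsilon$ and $\beta$ trivializes $\phi$, completing the argument.

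The principal obstacle will be the middle paragraph: making rigorous the passage from ``$\bar{\phi}_1$ is isotopic to the identity in $M_1$'' to ``$\phi$ equals $\id$ on $V_1' \cup W_1'$ after multiplication by push maps.'' This is a Birman-exact-sequence-type argument for a fibration whose fiber is a space of Heegaard-compatible embedded 3-balls in $M_1$; the technical point is to verify that the $\lambda_j$ and $\mu_j$ exhaust the fundamental group of this fiber, rather than only accounting for the motion of the basepoint of the ball.
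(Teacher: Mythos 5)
Your overall architecture matches the paper's: both identify the stabilizer of $[P]$ via a Birman-exact-sequence argument on the $M_1$ side and absorb the residual ambiguity into the push maps $\lambda_j, \mu_j$ together with $\beta$ and $\epsilon$. The paper, however, performs the normalization in the opposite order: it first composes with powers of $\beta$ and $\epsilon$ so that $f$ fixes $E_0 \cup E_0'$ and then $\partial V_2' \cup \partial W_2'$, after which Alexander's trick makes $f$ the identity on all of $V_2' \cup W_2'$ \emph{before} the exact-sequence step. The $S^2 \times S^1$ side is thus disposed of at the outset, and there is no need for a separate ``direct analysis'' of the mapping class group of the punctured genus-$1$ splitting of $S^2 \times S^1$ rel its boundary sphere, which you leave unverified at the end.

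The real gap is in your middle paragraph, exactly where you flag concern, and it is a genuine error rather than merely a missing verification. Your claim that the push is a word in $\lambda_j, \mu_j$ because ``$\pi_1(V_1' \cup W_1') = \pi_1(M_1)$ is generated by the standard loops in $V_1$ and $W_1$'' invokes the wrong fundamental group. The witnessing isotopy must keep $B_1$ intersecting $\Sigma_1$ in a disk throughout, so the basepoint $p_1$ (the center of $B_1 \cap \Sigma_1$) moves on the Heegaard surface $\Sigma_1$, not freely through $M_1$; the relevant kernel is the image of $\pi_1(\Sigma_1, p_1)$, a surface group of rank $2(g-1)$, not the quotient $\pi_1(M_1)$. (This also explains why there are $2(g-1)$ push maps and not fewer.) A loop on $\Sigma_1$ that dies in $\pi_1(M_1)$ can still give a non-trivial point-pushing element, and your van Kampen argument does not account for it. Furthermore, even after the basepoint motion is controlled, $B_1$ can rotate in place, contributing an additional $\Integer$ factor; the paper tracks this with a second Birman-type exact sequence with $\Integer$ generated by the Dehn twist about $D_1 = B_1 \cap \Sigma_1$, which pulls back to $\beta^2$ on $M$. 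Your proposal folds this silently into the terminal $S^2 \times S^1$ step, but without making the two levels of the exact sequence explicit you cannot confirm that the listed generators suffice. (The paper also treats $g=2$ separately, since the first Birman sequence degenerates when $\Sigma_1$ is a torus.)
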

\begin{proof}
Let $m_j$, $l_j$, $b$ and $e$ be representative homeomorphisms of
$\mu_j$, $\lambda_j$, $\beta$ and $\epsilon$, respectively, preserving $P$.
We may assume that each of $m_j$, $l_j$ and $b^2$ fixes
$V'_2 \cup W'_2$.
Let $\varphi$ be any element of $\mathcal{G} (V, W; \Sigma)$ that preserves the equivalence class of $P$, and let
$f \in \Homeo_+ (M, V)$ be one of its representatives satisfying $f (P) = P$.
We will show that $f$ is isotopic to a composition of a finite number of ${\tilde{w}_i}^{\pm 1}$
${m_j}^{\pm 1}$, ${l_j}^{\pm 1}$, $b^{\pm 1}$ and $e^{\pm 1}$ up to an isotopy preserving $V$.

Let $E'_0$ be an essential disk in $W$ bounded by the unique reducing disk $\partial E_0$ in $V$.
Composing $f$ with a power of $b$, if necessary, and by an appropriate isotopy preserving $V$, we get a
map $f_1 \in \Homeo_+ (M, V)$ fixing $E_0 \cup E_0'$ and $P$.
Moreover, by composing $f_1$ with a power of $e$, if necessary, and by an appropriate isotopy preserving $V$, we get a
map $f_2 \in \Homeo_+ (M, V)$ fixing $E_0 \cup E_0'$, $\partial V'_2$ and $\partial W'_2$.
Note that the union of $E_0 \cup E_0'$ and $\partial V'_2 \cap \partial W'_2$ cuts $V'_2 \cup W'_2$ into two 3-balls.
Thus, by Alexander's trick, we may assume that $f_2$ fixes $V_2' \cup W_2'$.

Suppose first that $g \geqslant 3$.
Let $D_1$ be the disk $\Sigma_1 \cap B_1$ and
choose a point $p_1$ in the interior of $D_1$.
By the Birman exact sequence \cite{Bir74}, we have the following commutative diagrams:
\[\xymatrix{
1 \ar[r] &
\pi_1 (\Sigma_1 , p_1) \ar[r]\ar[d]^{=}\ar@{}[dr]|\circlearrowleft
& \MCG_+(M_1, V_1, p_1) \ar[r]\ar[d]\ar@{}[dr]|\circlearrowleft &
\MCG_+(M_1, V_1) \ar[r]\ar[d] & 1\\
1 \ar[r] & \ar[r]^-{\mathrm{push}}  \pi_1 (\Sigma_1 , p_1)
\ar[r] & \ar[r]^-{\mathrm{forget}} \MCG_+(\Sigma_1 , p_1)  &
 \MCG_+( \Sigma_1 ) \ar[r] & 1 ,
} \]
and
\[\xymatrix{
1 \ar[r] &
\Integer \ar[r]\ar[d]^{=}\ar@{}[dr]|\circlearrowleft &  \MCG_+(M_1, V_1 ~\mathrm{rel}~ D_1) \ar[r]\ar[d]\ar@{}[dr]|\circlearrowleft &
\MCG_+(M_1, V_1 , p_1) \ar[r]\ar[d] & 1\\
1 \ar[r] & \ar[r] \Integer
\ar[r] & \MCG_+( \Sigma_1 ~\mathrm{rel}~D_1) \ar[r] &
 \MCG_+( \Sigma_1 , p_1) \ar[r] & 1 	.
} \]
In these diagrams, each vertical arrow is an injective homeomorphism.
In the first diagram, the arrow ``$\xrightarrow{\mathrm{push}}$" implies the {\it pushing map} and
``$\xrightarrow{\mathrm{forget}}$" implies the {\it forgetful map}.
The group $\Integer$ in the second diagram is generated by the Dehn twist about the disk $D_1$.
See for instance \cite{FM12, HH12}.
By the assumption, the group $\MCG_+(M_1 , V_1) = \mathcal{G}(V_1, W_1 ; \Sigma_1)$ is generated by
$\omega_1, \omega_2, \cdots, \omega_m$.
The image of $\pi_1 (\partial V_1, p_1)$ in $\MCG_+((M_1, V_1, p_1)$ is the subgroup
generated by the elements whose representatives correspond to
$m_j|_{V'_1 \cup W'_1}$ and $l_j|_{V'_1 \cup W'_1}$, where $j \in \{ 1, 2, \ldots, g-1 \}$.
Moreover a generator of $\Integer$ in the second diagram
corresponds to $b^2|_{V'_1 \cup W'_1}$.
Therefore, by the above diagrams and
a natural identification
\[\MCG_+(M_1 , V_1 ~\mathrm{rel}~ D_1) \cong \MCG_+(M , V ~\mathrm{rel}~ V'_2 \cup W'_2), \]
it follows that
$f_2$ can be written as a composition of
a finite number of
$\tilde{w}_i$ $(i \in \{ 1 , 2 , \ldots, n \})$, ${m_j}^{\pm 1}$, ${l_j}^{\pm 1}$
$(j \in \{ 1 , 2 , \ldots, g - 1 \})$ and $b^{\pm 2}$
up to isotopy preserving $V$.

Suppose that $g = 2$.
Then instead of the first diagram in the above argument,
we have the following simpler diagram:
\[\xymatrix{
1 \ar[r] & \ar[r]^-{\cong}
\MCG_+(M_1, V_1, p_1) \ar[d]\ar@{}[dr]|\circlearrowleft &
\MCG_+(M_1, V_1) \ar[r]\ar[d] & 1\\
1 \ar[r] & \ar[r]^-{\cong} \MCG_+( \Sigma_1 , p_1) &
 \MCG_+( \Sigma_1 ) \ar[r] & 1 .
} \]
Hence $f_2$ can be written as the composition of
a finite number of
$\tilde{w}_i$ $(i \in \{ 1 , 2 , \ldots, n \})$ and $b^{\pm 2}$
up to isotopy preserving $V$.
This completes the proof.
\end{proof}

In addition to the elements $\tilde{\omega}_i$, $\mu_j$, $\lambda_j$,
$\beta$ and $\epsilon$, we define the elements $\lambda_j^*$ and $\mu_j^*$ of $\mathcal{G} (V, W; \Sigma)$ for each $j \in \{ 1 , 2 , \ldots , g-1 \}$ as follows.
Let $V^*$ be the handlebody $V$ cut off by the unique reducing disk $E_0$.
Let $E_0^+$ and $E_0^-$ be disks in $\partial V^*$ coming from $E_0$.
The elements $\lambda_j^*$ and $\mu_j^*$ for each $j  \in \{ 1 , 2 , \ldots , g-1 \}$ are defined by
pushing $E_0^+$ along the arcs depicted in
Figure \ref{fig:pushing2}.
Each of these maps is realized by sliding a foot of the 1-handles
$\Nbd (E_0; V) $ and $\Nbd (E_0'; W) $
of $V$ and $W$, respectively, where $E_0'$ is a disk  in $W$ bounded by $\partial E_0$.
We observe that, for any simple arcs $\gamma$ and $\gamma'$ on $\partial V^*$ connecting $\partial E_0^+$ and $\partial E_0^-$, 
there exists an element $\varphi$ of $\mathcal{G} (V, W; \Sigma)$, which is a finite product of $\beta$,  $\lambda_j^*$ and $\mu_j^*$ for $j \in \{ 1 , 2 , \ldots , g-1 \}$, such that $\varphi$ has a representative map sending $\gamma'$ to $\gamma$.

\begin{figure}[htbp]
\begin{center}
\includegraphics[width=12cm,clip]{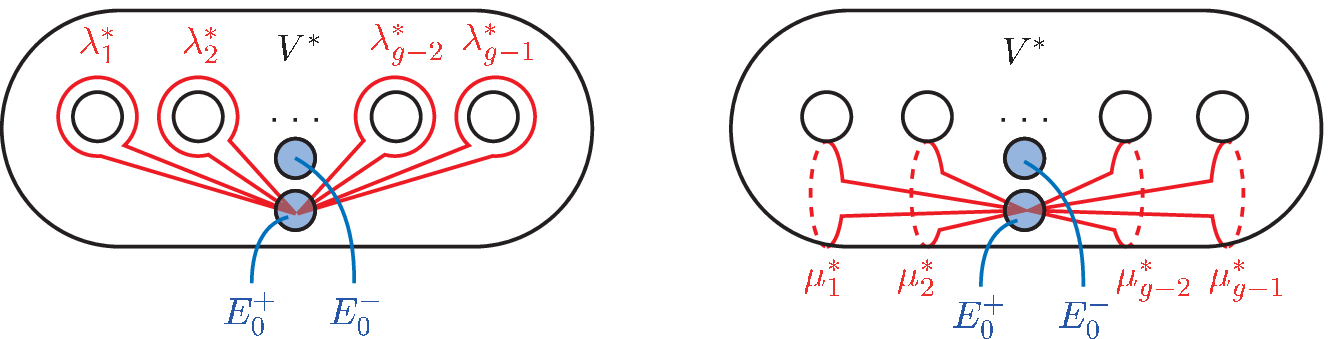}
\caption{}
\label{fig:pushing2}
\end{center}
\end{figure}

Now let $\psi$ be any element of $\mathcal{G} (V, W; \Sigma)$ represented by a map $k \in \Homeo_+ (M, V)$.
Then $k(P)$ is also a Haken sphere representing a vertex of the complex $\mathcal H_{E_0}$.
If $k(P)$ is equivalent to $P$, then $\psi$ preserves the equivalence class of $P$ and is a finite product of the elements $\tilde{\omega}_i$, $\mu_j$, $\lambda_j$,
$\beta$ and $\epsilon$ by Lemma \ref{lem:stabilizer of a Haken sphere}.
Suppose that $k(P)$ is not equivalent to $P$.
We may consider $P$ and $k(P)$ as the Haken spheres determined by simple arcs $\gamma$ and $\gamma'$ on $\partial V^*$ respectively, each of which connects $\partial E_0^+$ and $\partial E_0^-$.
Then there exists a finite product, say $\varphi$, of $\beta$,  $\lambda_j^*$ and $\mu_j^*$ 
such that $\varphi$ has a representative map sending $\gamma'$ to $\gamma$.
This map also sends $k(P)$ to $P$ up to isotopy.
Thus the composition $\varphi \psi$ preserves the equivalence class of $P$, and consequently $\varphi$ is a finite product of
$\tilde{\omega}_i$, $\mu_j$, $\lambda_j$, $\lambda^*_j$, $\mu^*_j$, $\beta$ and $\epsilon$.
We summarize this observation as follows.

\begin{theorem}
\label{thm:generators of Goeritz groups}
Let $(V, W; \Sigma)$ be the Heegaard splitting obtained from a genus-$(g-1)$ splitting $(V_1, W_1; \Sigma_1)$ for a $3$-manifold and the genus-$1$ splitting $(V_2, W_2; \Sigma_2)$ for $S^2 \times S^1$, where $g \geqslant 2$.
Suppose that there exists a unique reducing disk $E_0$ in $V$.
If the Goeritz group of $(V_1 , W_1; \Sigma_1)$ is finitely generated, then the Goeritz group of $(V, W; \Sigma)$
is also finitely generated.
Moreover, under the setting described above, the Goeritz group of $(V, W; \Sigma)$
is generated by
$\tilde{\omega}_i$, $\mu_j$, $\lambda_j$, $\lambda^*_j$, $\mu^*_j$, $\beta$ and $\epsilon$,
where $i \in \{ 1 , 2 , \ldots, m \}$ and $j \in \{ 1 , 2 , \ldots, g-1 \}$.
\end{theorem}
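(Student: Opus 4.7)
The plan is to formalize the argument outlined in the paragraph immediately preceding the theorem statement. Given any $\psi \in \mathcal{G}(V, W; \Sigma)$ with representative $k \in \Homeo_+(M, V)$, the first step is to observe that $k(P)$ represents a vertex of the complex $\mathcal{H}_{E_0}$: since $E_0$ is the unique reducing disk in $V$, the disk $k(P) \cap V$ must cut off from $V$ a solid torus whose meridian disk is isotopic to $E_0$. Equivalently, $k(P)$ is determined by a simple arc $\gamma'$ on $\partial V^*$ connecting $\partial E_0^+$ and $\partial E_0^-$, where $\gamma$ denotes the analogous arc determining $P$ itself.

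The crux of the proof is to produce $\varphi \in \mathcal{G}(V, W; \Sigma)$, expressible as a finite product of $\beta, \lambda^*_j, \mu^*_j$, such that $\varphi \psi$ preserves the equivalence class of $P$. By Corollary \ref{cor:subcomplex of arc complex}, the complex $\mathcal{A}^*_{g-1, 2}$ is connected, so $\gamma$ and $\gamma'$ are joined by an edge-path $\gamma = \gamma_1, \gamma_2, \ldots, \gamma_n = \gamma'$ with consecutive arcs disjoint. Using the geometric interpretation of $\lambda^*_j, \mu^*_j$ as slidings of the foot of the $1$-handle $\Nbd(E_0; V)$ (together with its dual in $W$) along the arcs depicted in Figure \ref{fig:pushing2}, and of $\beta$ as the half-twist about $P \cap V$, the same sliding/Dehn-twist construction used in the proof of Lemma \ref{lem:transitivity of Goeritz group on the set of Haken spheres} shows that each elementary move from $\gamma_{i+1}$ to $\gamma_i$ is realized by some element of the subgroup generated by $\beta, \lambda^*_j, \mu^*_j$. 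Composing these elementary moves along the edge-path yields the desired $\varphi$.

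Once $\varphi \psi$ stabilizes the equivalence class of $P$, Lemma \ref{lem:stabilizer of a Haken sphere} expresses it as a finite product of $\tilde{\omega}_i, \mu_j, \lambda_j, \beta, \epsilon$. Hence $\psi = \varphi^{-1}(\varphi \psi)$ is a finite product of $\tilde{\omega}_i, \mu_j, \lambda_j, \lambda^*_j, \mu^*_j, \beta, \epsilon$, which is the ``moreover'' clause. Since by hypothesis $i$ ranges over a finite generating set of $\mathcal{G}(V_1, W_1; \Sigma_1)$ and $j$ over $\{1, 2, \ldots, g-1\}$, the full list is finite, yielding finite generation of $\mathcal{G}(V, W; \Sigma)$.

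The main obstacle is the intermediate step: verifying that $\beta, \lambda^*_j, \mu^*_j$ suffice to realize every edge-move in $\mathcal{H}_{E_0}$. This amounts to checking that the specific sliding arcs used to define $\lambda^*_j, \mu^*_j$ (Figure \ref{fig:pushing2}), together with the half-twist $\beta$, generate enough of the handle-slide action on $\partial V^* \cong \Sigma_{g-1, 2}$ to carry any one arc connecting $\partial E_0^+$ to $\partial E_0^-$ onto any disjoint such arc; this is a standard but slightly delicate computation in the based fundamental group of $\partial V^*$ at $\partial E_0^+$, and is in fact precisely the content of the last sentence in the paragraph just before the theorem.
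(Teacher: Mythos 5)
Your proposal is correct and follows essentially the same argument as the paragraph immediately preceding the theorem, which is the paper's own proof: you reduce to Lemma \ref{lem:stabilizer of a Haken sphere} when $k(P)$ is equivalent to $P$, and otherwise produce a corrector $\varphi$ in the subgroup generated by $\beta$, $\lambda^*_j$, $\mu^*_j$. The only difference in emphasis is that you route the transitivity assertion through an edge-path in $\mathcal{A}^*_{g-1,2}$ (mirroring the proof of Lemma \ref{lem:transitivity of Goeritz group on the set of Haken spheres}) rather than simply stating it, but both versions ultimately rest on the same geometric observation the paper records without detailed proof.
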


By Proposition \ref{prop:uniqueness of the reducing disk}
and
Theorems \ref{thm:Hempel distance and the Goeritz groups} and
\ref{thm:generators of Goeritz groups} we have the following:

\begin{corollary}
\label{cor:generators of Goeritz groups for the connected sums}
Let $(V_1, W_1; \Sigma_1)$ be a genus-$(g-1)$ Heegaard splitting of Hempel distance at least $4$
for a closed orientable $3$-manifold $M_1$, where $g \geqslant 2$, and let $(V_2, W_2; \Sigma_2)$ be the genus-$1$ Heegaard splitting for $S^2 \times S^1$.
If $(V, W ; \Sigma)$ is the splitting for $M_1 \# (S^2 \times S^1)$
obtained from $(V_1, W_1; \Sigma_1)$ and $(V_2, W_2; \Sigma_2)$, then the Goeritz group of the splitting $(V, W; \Sigma)$ is finitely generated.
\end{corollary}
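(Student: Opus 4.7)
The plan is to assemble the corollary directly from the three main ingredients already established in the paper, namely Johnson's finiteness theorem, the uniqueness of the reducing disk under a Hempel-distance hypothesis, and the finite-generation criterion for glued splittings. Because the statement is a corollary whose hypotheses plug straight into those inputs, no new geometric work is needed; the task is simply to verify each hypothesis.

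First, I would apply Theorem \ref{thm:Hempel distance and the Goeritz groups} (Johnson) to the splitting $(V_1, W_1; \Sigma_1)$. Since its Hempel distance is assumed to be at least $4$, the Goeritz group $\mathcal{G}(V_1, W_1; \Sigma_1)$ is finite, and in particular finitely generated by some $\omega_1, \omega_2, \ldots, \omega_m$. This provides one of the two hypotheses needed to invoke Theorem \ref{thm:generators of Goeritz groups}.

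Second, I would establish the other hypothesis of Theorem \ref{thm:generators of Goeritz groups}, namely the existence of a unique reducing disk $E_0$ in $V$. Since the Hempel distance of $(V_1, W_1; \Sigma_1)$ is at least $4 \geqslant 2$, Proposition \ref{prop:uniqueness of the reducing disk} applies directly and yields both that $V$ contains a unique reducing disk and that $\mu(V, W; \Sigma) > 0$; only the uniqueness is strictly required here, but the setup matches exactly the standing assumption of Section \ref{sec:Goeritz groups}.

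Finally, having verified both hypotheses, I would invoke Theorem \ref{thm:generators of Goeritz groups}: the Goeritz group of $(V, W; \Sigma)$ is generated by the finite list $\tilde{\omega}_i$, $\mu_j$, $\lambda_j$, $\lambda_j^*$, $\mu_j^*$, $\beta$, $\epsilon$ with $i \in \{1, \ldots, m\}$ and $j \in \{1, \ldots, g-1\}$. This is a finite generating set, so $\mathcal{G}(V, W; \Sigma)$ is finitely generated, completing the proof. Since the argument is purely a chain of implications among already-proven results, there is no real obstacle; the only point warranting a line of remark is that Hempel distance $\geqslant 4$ comfortably exceeds the threshold $\geqslant 2$ required by Proposition \ref{prop:uniqueness of the reducing disk}, so both inputs are simultaneously available.
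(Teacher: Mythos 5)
Your proposal is correct and follows exactly the paper's argument: the paper derives the corollary directly from Proposition \ref{prop:uniqueness of the reducing disk} and Theorems \ref{thm:Hempel distance and the Goeritz groups} and \ref{thm:generators of Goeritz groups}, which is precisely the chain of implications you spell out. Nothing more is required.
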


We note that Corollary \ref{cor:generators of Goeritz groups for the connected sums}  
implies, in particular, that 
the Goeritz group of the genus-$2$ Heegaard splitting for $S^2 \times S^1$ is finitely generated, 
which is shown in \cite{CK13a}.

\section*{Acknowledgments}
This work was carried out while the second-named author was visiting
Universit\`a di Pisa as a
JSPS Postdoctoral Fellow for Research Abroad.
He is grateful to the university and its staffs for
the warm hospitality.

\end{document}